\documentclass[12pt,reqno]{amsart}
\usepackage{amsmath,amssymb,amsthm,mathrsfs}
\allowdisplaybreaks
\usepackage{graphicx,cite}
\usepackage{newtxtext,newtxmath}
\usepackage{cases}
\usepackage{color}
\usepackage{appendix}
\usepackage{enumerate}
\usepackage{bm}
\usepackage{mathtools}
\usepackage[ruled]{algorithm2e}
\usepackage{verbatim}
\usepackage{subfigure} 
\usepackage{soul, xcolor}
\usepackage{float}
\usepackage[colorlinks, linkcolor=blue, citecolor=blue]{hyperref}
\graphicspath{{figures/}}
\usepackage{epstopdf}
\usepackage{rotating}
\usepackage{textcomp}
\usepackage{cleveref}
\crefrangelabelformat{equation}{(#3#1#4-#5#2#6)}
\allowdisplaybreaks[4] 
\newtheorem{theorem}{Theorem}[section]

\newtheorem{lemma}[theorem]{Lemma}
\newtheorem{proposition}[theorem]{Proposition}
\newtheorem{remark}[theorem]{Remark}

\renewcommand{\Re}{\operatorname{Re}}

\numberwithin{equation}{section}

\begin{document}

\title[Limiting Absorption Principle in Multilayer Spheres ] {Limiting Absorption Principle for the Helmholtz Equation with Sign-Changing Coefficients in Multilayer Spheres}

\author{Wenjing Zhang}
\address{School of
Mathematics and Statistics,  Northeast Normal University, Changchun, Jilin 130024, China }
\email{zhangwj034@nenu.edu.cn}
	
\author{Yixian Gao}
\address{School of
Mathematics and Statistics, Center for Mathematics and
Interdisciplinary Sciences, Northeast Normal University, Changchun, Jilin 130024, China. }
\email{gaoyx643@nenu.edu.cn}

\thanks{  The research of Y. Gao was supported by the NSFC
(project number, 12371187) and Science and Technology Development Plan Project of Jilin Province (project number, 20240101006JJ)}
	
\subjclass[2020]{35J05, 35P25, 35R05}

\keywords{Helmholtz equation, limiting absorption principle, sign-changing coefficients, $\mathbb T$-coercivity, multilayer metamaterials, Dirichlet-to-Neumann operator
}

\begin{abstract}
This paper investigates a multilayered Helmholtz model in $\mathbb{R}^d$ ($d \ge 2$) characterized by concentric layers of materials with alternating positive and negative refractive indices. To overcome the loss of coercivity induced by the sign-changing material parameters, we construct a bespoke $\mathbb T$-coercivity operator to restore the coercive structure of the problem. Furthermore, to address the inherent lack of compactness on unbounded domains, we integrate a complex-wavenumber Dirichlet-to-Neumann (DtN) operator into this framework. By combining this variational synthesis with sharp \textit{a priori} estimates, we rigorously establish the limiting absorption principle and prove the well-posedness of the corresponding transmission problem in appropriate function spaces. Crucially, we quantify the dependence of uniqueness on the domain geometry by explicitly analyzing the optimal trace constant, thereby providing a rigorous mathematical criterion for the design of multi-layer metamaterials.
\end{abstract}

\maketitle
	
\section{introduction}
In the context of time-harmonic scattering on unbounded domains, the operator $H - \lambda I$ inherently fails to admit a bounded inverse on the standard $L^2$ space whenever the spectral parameter $\lambda$ resides within its continuous spectrum. Consequently, the physically relevant outgoing solution cannot be uniquely characterized within the standard $L^2$ framework. To isolate the steady-state solution satisfying the Sommerfeld radiation condition, a classical and rigorous approach involves regularizing the spectral parameter via a small absorption term, replacing $\lambda$ with $\lambda + i\varepsilon$. By analyzing the corresponding operator and passing to the limit as $\varepsilon \to 0^+$, one selects the physical radiating solution---a procedure  known as the limiting absorption principle (LAP).

The genesis of selecting physical radiating solutions via infinitesimal absorption traces back at least to Ignatowsky \cite{v1905reflexion}. Sve\v{s}nikov \cite{MR37195} formally articulated this concept as the radiation principle, applying it to waveguide problems.
The mathematical foundation was substantially advanced by \`E\u{\i}dus  \cite{MR145187} in the early 1960s in connection with wave propagation  problems. In the 1970s, Agmon's influential work \cite{MR397194}  formulated the limiting absorption principle as a powerful resolvent  framework for Schr\"odinger operators, firmly establishing its role as a  fundamental tool in modern scattering theory and spectral analysis on  unbounded domains.

In recent years, the LAP has been successfully extended to accommodate increasingly complex heterogeneous media. Notable developments include the work of Kirsch and Lechleiter \cite{kirsch2018}, who employed the Floquet--Bloch transform to establish the LAP for two-dimensional periodic media, and Hu and Kirsch \cite{hu2026}, who addressed scattering by periodic curves. Furthermore, Cacciafesta et al. \cite{MR3870070} derived the principle for variable-coefficient Helmholtz equations in exterior domains via uniform \textit{a priori} estimates.

However, a distinct and mathematically substantial challenge arises when the coefficients change sign across material interfaces, a setting fundamental to the modeling of negative-index materials. In this indefinite regime, the associated sesquilinear form loses its standard coercivity, precluding the direct application of classical variational techniques. The functional analytic treatment of such problems has been profoundly shaped by the $\mathbb{T}$-coercivity framework initiated by Bonnet-Ben Dhia et.al. \cite{MR2644187, Bonnet2012, MR3200087}. Its main idea is to construct a problem-adapted bounded isomorphism \(\mathbb{T}\) so that the transformed form recovers a coercive-plus-compact structure. In the scalar setting, a key feature of this theory is the appearance of ``forbidden'' contrast ratios: when the ratio of material parameters across an interface belongs to a critical interval, depending in general on the interface geometry, the corresponding operator may fail to be Fredholm in the natural variational setting \cite{Bonnet2012}. Parallel to these developments, Nguyen \cite{MR3515306} established the LAP and well-posedness for sign-changing transmission problems through a delicate analysis of \textit{a priori} estimates for the Cauchy problem. Building upon these foundational frameworks, we previously established the LAP for Helmholtz systems with sign-changing coefficients in two-dimensional periodic structures \cite{MR5002626}, utilizing $\mathbb{T}$-coercivity to recover an appropriate coercive structure.

Beyond periodic structures, concentric multilayered spherical geometries have served as important model geometries, owing to their symmetry and analytical tractability in scattering problems \cite{CHEN20155, ZHU2020, Johnson96}. These configurations have gained renewed significance in the modern theory of metamaterials following the experimental observation of negative refraction in artificially structured media \cite{shelby2001}, which provided important experimental support for Veselago's theoretical prediction of media with simultaneously negative permittivity and permeability \cite{Veselago1968}. By appropriately tuning the material parameters and geometric scales of the constituent layers, wave propagation can be controlled at subwavelength scales through resonant or scattering-cancellation mechanisms \cite{DENG2026, PhysRevLett.100.113901}. More broadly, multilayered architectures provide versatile physical prototypes for controlling wave phenomena across electromagnetic, acoustic, and elastic regimes \cite{DENG2026, PhysRevLett.301, PhysRevLett.100.113901, PhysRevLett.108} and serve as platforms for realizing metamaterial elements with anomalous functionalities.

Motivated by these advancements, the present paper undertakes a rigorous mathematical investigation of scattering by a multilayered concentric structure comprising alternating positive- and negative-refractive-index layers. From an analytical standpoint, this leads to an unbounded transmission problem for a Helmholtz-type equation in $\mathbb{R}^d$ $(d \ge 2)$, in which the permittivity and permeability exhibit sign-changing jump discontinuities across adjacent interfaces. While the LAP has been extensively studied for wave propagation in unbounded domains, its application to concentric multilayered structures with sign-changing parameters remains largely unexplored. In this work, we establish the LAP for such sign-changing concentric models in $\mathbb{R}^d$ $(d \ge 2)$.

In this configuration, the adjacency of layers with opposite signs of material properties yields a governing operator that suffers from a severe loss of global ellipticity. To overcome this, we construct a bespoke $\mathbb{T}$-coercivity operator tailored to the concentric multi-layer geometry. The transition from periodic structures to radial stratification represents a significant mathematical leap; here, the $\mathbb{T}$ operator must be meticulously designed to restore the coercive structure while remaining compatible with the radial transmission conditions. Furthermore, to handle the lack of compactness caused by the unbounded exterior domain, we incorporate into the $\mathbb{T}$-coercivity framework the spherical DtN operator for complex wavenumbers \cite{gräßle2025}. This allows us to combine the interior $\mathbb{T}$-coercivity argument with the exact radiation condition encoded by the DtN map, and to reformulate the unbounded dissipative scattering problem as an equivalent boundary value problem on a truncated domain.

A central achievement of our mathematical analysis is the explicit quantification of how the well-posedness of this singular problem hinges upon the contrast ratio of the permittivities. We rigorously derive a sufficient condition for uniqueness that is intrinsically linked to the optimal constant of the  trace theorem. By implementing a targeted change of variables and analyzing the associated eigenvalue problems via tailored functionals, we completely characterize the dependence of these constants on the radial geometry (specifically, the shell aspect ratios). Consequently, our results provide a rigorous theoretical criterion for the design of multi-layer metamaterials: for a prescribed negative-index material contrast, the uniqueness of radiating solutions can be guaranteed via a strategic, mathematically informed selection of the layer dimensions.

The remainder of this paper is organized as follows. Section \ref{sec2} defines the problem setting and establishes the requisite notation. Section \ref{sec3} provides several auxiliary lemmas essential for the subsequent analysis. Our main results and their proofs are presented in Section \ref{sec4}. Finally, Section \ref{sec5} serves as an appendix containing supplementary material to support the analysis.

\section{Problem Formulation}\label{sec2}

We consider the Helmholtz equation in $\mathbb{R}^d$ ($d \ge 2$) within a radially stratified medium consisting of $n \ge 3$ concentric layers, where $n$ is an odd integer. The geometry is defined by a sequence of interface radii $0 < R_1 < R_2 < \dots < R_{n-1} < \infty$, which partition $\mathbb{R}^d$ into the following open domains:
\begin{itemize}
    \item the core region $D_1 := \{ x \in \mathbb{R}^d : |x| < R_1 \}$;
    \item the annular layers $D_i := \{ x \in \mathbb{R}^d : R_{i-1} < |x| < R_i \}$ for $i = 2, \dots, n-1$;
    \item the exterior domain $D_n := \{ x \in \mathbb{R}^d : |x| > R_{n-1} \}$.
\end{itemize}
The interface between $D_i$ and $D_{i+1}$ is denoted by $\Gamma_i := \partial D_i \cap \partial D_{i+1}$ for $1 \le i \le n-1$.

\begin{figure}[h]
\centering
\includegraphics[width=0.4\textwidth]{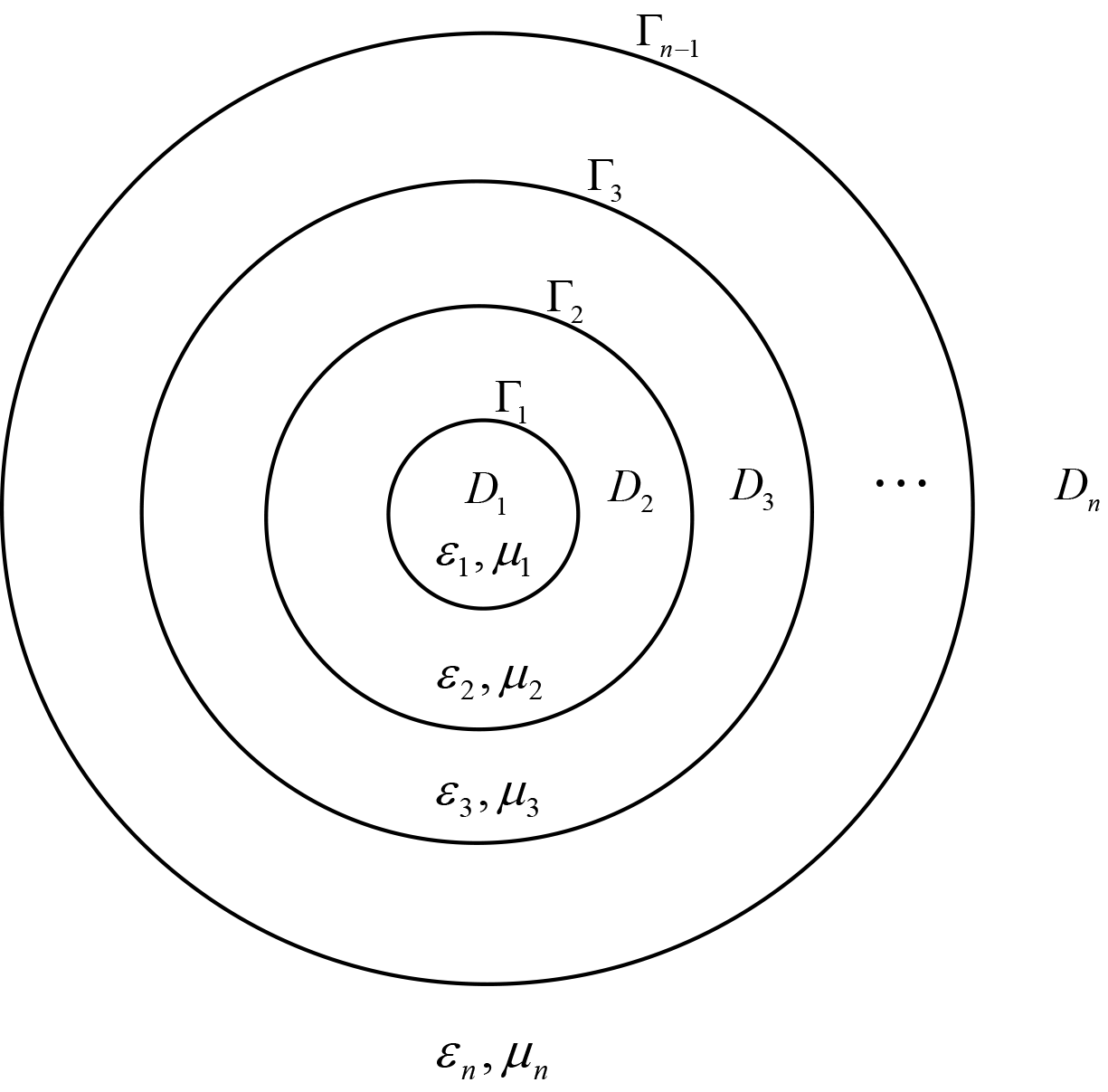}
\caption{Schematic of the problem geometry. While the two-dimensional configuration ($d=2$) is illustrated, the analysis pertains to the radially stratified structure in any dimension $d \ge 2$.}
\label{figure1}
\end{figure}

The medium consists of layers with alternating material properties. We use \(k\) to denote odd layer indices and \(l\) to denote even layer indices. More precisely, we define
\[
\mathcal{I}_{\mathrm{odd}}
:=
\{\, k\in\{1,\ldots,n\} : k \text{ is odd} \,\},
\qquad
\mathcal{I}_{\mathrm{even}}
:=
\{\, l\in\{1,\ldots,n\} : l \text{ is even} \,\}.
\]
Thus, \(k\in \mathcal{I}_{\mathrm{odd}}\) labels an odd-indexed layer, whereas \(l\in \mathcal{I}_{\mathrm{even}}\) labels an even-indexed layer. The permittivity $\varepsilon(x)$ and permeability $\mu(x)$ are piecewise constant functions defined by
\begin{equation*}
    \varepsilon(x) = \varepsilon_i, \quad \mu(x) = \mu_i, \quad x \in D_i,
\end{equation*}
where the constants satisfy $\varepsilon_i, \mu_i > 0$ for $i \in \mathcal{I}_{\mathrm{odd}}$ and $\varepsilon_i, \mu_i < 0$ for $i \in \mathcal{I}_{\mathrm{even}}$. Consequently, the material parameters exhibit jump discontinuities in sign across each interface $\Gamma_i$.

It is well known that in the case where the contrast ratio at an interface equals $-1$, i.e., $\varepsilon_i / \varepsilon_{i+1} = -1$, the system becomes ill-posed \cite{MR3200087, MR2263683}. To ensure the validity of our functional-analytic framework, we assume throughout this work that
\begin{equation*}
    \frac{\varepsilon_i}{\varepsilon_{i+1}} \neq -1 \quad \text{for all } i \in \{1, \dots, n-1\}.
\end{equation*}

We investigate the well-posedness of the following system:
\begin{equation}\label{eq11}
\begin{cases}
\nabla \cdot \big(\varepsilon^{-1} \nabla u\big) + \omega^2 \mu u = f & \text{in } \mathbb{R}^d,\\[1mm]
u \text{ satisfies the Sommerfeld radiation condition as } |x| \to \infty.
\end{cases}
\end{equation}
The source term \(f\in L^2(\mathbb{R}^d)\) is assumed to have compact support. More precisely, there exists a fixed radius \(R_0>0\) such that
\[
\operatorname{supp} f \Subset B_{R_0},
\]
where \(B_{R_0}\) denotes the open ball centered at the origin. The parameter \(\omega>0\) denotes the frequency of the time-harmonic wave.

Owing to the indefinite sign of the material parameters across the interfaces, the governing operator lacks ellipticity, which precludes the direct application of standard elliptic estimates. To overcome this, we introduce a dissipation term and consider the corresponding dissipative problem to study the limiting absorption principle. We first establish existence and uniqueness for the regularized (dissipative) problem for a fixed $\sigma > 0$ and subsequently investigate the asymptotic behavior of its solutions as $\sigma \to 0^+$. Specifically, we consider the following dissipative system:
\begin{equation}\label{eq10}
\begin{cases}
\nabla \cdot (\varepsilon^{-1}_\sigma \nabla u_\sigma) + \omega^2 \mu u_\sigma + \mathrm{i}\sigma u_\sigma = f & \text{in } \mathbb{R}^d,\\[3pt]
u_\sigma \text{ satisfies the outgoing condition as } |x| \to \infty,
\end{cases}
\end{equation}
where the regularized permittivity $\varepsilon_\sigma$ is the piecewise constant function defined by
\[
\varepsilon_\sigma(x) =
\begin{cases}
\varepsilon_k, & x \in D_k,\\
\varepsilon_l + \dfrac{\mathrm{i}\sigma}{\omega}, & x \in D_l,
\end{cases}
\]
with $\sigma > 0$ denoting a small regularization parameter.

Across each interface $\Gamma_i$, the solution $u_\sigma$ is subject to transmission conditions ensuring the continuity of the trace and the conormal derivative. These matching conditions are given by
\begin{equation}\label{eq:trans_cond}
\begin{cases}
u_\sigma|_{D_k} = u_\sigma|_{D_{k+1}}
& \text{on } \Gamma_k,\quad k = 1,3,\ldots,n-2, \\
\varepsilon_k^{-1} \nabla u_\sigma|_{D_k} \cdot\vec{n}
= \left( \varepsilon_{k+1} + \dfrac{\mathrm{i}\sigma}{\omega} \right)^{-1}
\nabla u_\sigma|_{D_{k+1}} \cdot\vec{n}
& \text{on } \Gamma_k,\quad k = 1,3,\ldots,n-2, \\
u_\sigma|_{D_l} = u_\sigma|_{D_{l+1}}
& \text{on } \Gamma_l,\quad l = 2,4,\ldots,n-1, \\
\left( \varepsilon_l + \dfrac{\mathrm{i}\sigma}{\omega} \right)^{-1}
\nabla u_\sigma|_{D_l} \cdot\vec{n}
= \varepsilon_{l+1}^{-1}
\nabla u_\sigma|_{D_{l+1}} \cdot \vec{n}
& \text{on } \Gamma_l,\quad l = 2,4,\ldots,n-1,
\end{cases}
\end{equation}
where $\vec{n}$ denotes the unit normal vector to $\Gamma_i$ oriented from $D_i$ toward $D_{i+1}$. Consistent with our previous definition of $\varepsilon_\sigma$, the flux conditions explicitly account for the dissipative regularization in the even-indexed layers.

\section{Preliminaries}\label{sec3}
Due to  the sign-changing nature of the material parameters across interfaces, we perform a localized analysis on each subdomain $D_i$ to derive uniform a priori estimates.

\begin{lemma}\label{lemm1}
For each $i \in \{1, \dots, n-1\}$, let $u_\sigma \in H^1(D_i)$ be a solution to the local problem
\begin{equation*}
\begin{cases}
\nabla \cdot (\varepsilon^{-1}_\sigma \nabla u_\sigma) + \omega^2 \mu u_\sigma + \mathrm{i}\sigma u_\sigma = f & \text{in } D_i, \\
u_\sigma \in H^{1/2}(\partial D_i), \quad \varepsilon^{-1}_\sigma \nabla u_\sigma \cdot \vec{n} \in H^{-1/2}(\partial D_i) & \text{on } \partial D_i.
\end{cases}
\end{equation*}
Then there exists a constant $C_i > 0$, independent of $\sigma$ and $f$, such that
\begin{equation*}
\|u_\sigma\|_{H^1(D_i)} \le C_i \left( \|\nabla u_\sigma \cdot \vec{n}\|_{H^{-1/2}(\partial D_i)} + \|f\|_{L^2(D_i)} + \|u_\sigma\|_{L^2(D_i)} \right).
\end{equation*}
\end{lemma}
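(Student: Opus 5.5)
The approach is a local energy (Caccioppoli-type) identity on the single subdomain $D_i$. On $D_i$ the coefficient $\varepsilon_\sigma^{-1}$ is a constant $a_i$, and $|a_i|$ is bounded above and below uniformly for $\sigma\in(0,1]$. For odd $i$, $a_i=\varepsilon_i^{-1}$. For even $i$, $a_i=(\varepsilon_i+\mathrm{i}\sigma/\omega)^{-1}$, so $|a_i|\le|\varepsilon_i|^{-1}$ and $|a_i|\ge(|\varepsilon_i|+1/\omega)^{-1}$. Since the sign problem disappears once we stay inside one layer, no $\mathbb T$-coercivity is needed. I will simply multiply the equation by $\overline{u_\sigma}$, integrate over $D_i$ and integrate by parts.

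Concretely, Green's formula gives
\begin{equation*}
-a_i\int_{D_i}|\nabla u_\sigma|^2\,dx+\big\langle a_i\nabla u_\sigma\cdot\vec n,\,u_\sigma\big\rangle_{\partial D_i}+\int_{D_i}(\omega^2\mu_i+\mathrm{i}\sigma)|u_\sigma|^2\,dx=\int_{D_i}f\,\overline{u_\sigma}\,dx .
\end{equation*}
Here $\langle\cdot,\cdot\rangle$ is the $H^{-1/2}$--$H^{1/2}$ duality, and $\vec n$ is the outward normal of $D_i$ (for $i\ge2$ the boundary has two spheres). Dividing by $a_i$ and taking absolute values, we use $|a_i|^{-1}\le C$, $|\mu_i|<\infty$ and $\sigma\le1$ to get
\begin{equation*}
\|\nabla u_\sigma\|_{L^2(D_i)}^2\le \|\nabla u_\sigma\cdot\vec n\|_{H^{-1/2}(\partial D_i)}\|u_\sigma\|_{H^{1/2}(\partial D_i)}+C\|u_\sigma\|_{L^2(D_i)}^2+C\|f\|_{L^2(D_i)}\|u_\sigma\|_{L^2(D_i)} .
\end{equation*}

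Next, the trace theorem on the bounded Lipschitz (indeed smooth) domain $D_i$ gives $\|u_\sigma\|_{H^{1/2}(\partial D_i)}\le C_{\mathrm{tr}}\|u_\sigma\|_{H^1(D_i)}$. Writing $X=\|u_\sigma\|_{H^1(D_i)}$, adding $\|u_\sigma\|_{L^2}^2$ to both sides and applying Young's inequality $ab\le\delta a^2+b^2/(4\delta)$ to the boundary and source terms, I obtain
\begin{equation*}
X^2\le \delta X^2+C_\delta\big(\|\nabla u_\sigma\cdot\vec n\|^2_{H^{-1/2}(\partial D_i)}+\|f\|^2_{L^2(D_i)}+\|u_\sigma\|^2_{L^2(D_i)}\big).
\end{equation*}
Choosing $\delta=1/2$ and absorbing the first term, then taking square roots, gives the claim. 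The constant $C_i$ depends only on $\varepsilon_i,\mu_i,\omega$ and the trace constant of $D_i$, and not on $\sigma$ or $f$.

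The only delicate point is uniformity in $\sigma$ for the even layers. This requires $|a_i|^{-1}=|\varepsilon_i+\mathrm{i}\sigma/\omega|$ to stay bounded, which holds for $\sigma$ in a bounded range, say $\sigma\le1$. One small technicality is that the statement uses $\|\nabla u_\sigma\cdot\vec n\|$ rather than the conormal derivative. On $D_i$ these differ only by the constant factor $a_i$, and $|a_i|$ is bounded above and below uniformly, so they are equivalent.

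Strictly speaking, the lemma is stated for $i\le n-1$, so only bounded domains occur; the core $D_1$ has boundary $\Gamma_1$ only. Note also that the pairing $\langle\nabla u_\sigma\cdot\vec n,u_\sigma\rangle$ is well defined because $\Delta u_\sigma=a_i^{-1}(f-(\omega^2\mu_i+\mathrm{i}\sigma)u_\sigma)\in L^2(D_i)$. Hence $\nabla u_\sigma\in H(\mathrm{div},D_i)$, and the generalized Green formula applies.
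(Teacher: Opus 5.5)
Your proof is correct and follows essentially the paper's route: you test with $\overline{u_\sigma}$ and apply Green's formula on the single layer $D_i$, where the coefficient is constant, and you multiply through by $\varepsilon_{\sigma,i}$ (your division by $a_i$); you then use the trace theorem and absorb the $\|u_\sigma\|_{H^1(D_i)}^2$ contributions via Young's inequality. The only difference is that the paper takes real parts and drops the dissipative term using $\operatorname{Re}(\mathrm{i}\sigma\varepsilon_{\sigma,i})\le 0$, whereas you take moduli and restrict to $\sigma\le 1$. The paper tacitly needs bounded $\sigma$ as well, since its source term carries the factor $|\varepsilon_{\sigma,i}|$; in fact, keeping the term $-(\sigma^2/\omega)\|u_\sigma\|_{L^2(D_i)}^2$ instead of discarding it would give uniformity for all $\sigma>0$.
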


\begin{proof}
Let $\varepsilon_{\sigma,i} := \varepsilon_\sigma|_{D_i}$ denote the local restriction of the regularized permittivity to the subdomain $D_i$. Testing the governing equation against $\overline{u_\sigma} \in H^1(D_i)$ and invoking Green’s first identity, we obtain the variational identity:
\begin{equation*}
\varepsilon_{\sigma,i}^{-1} \int_{D_i} |\nabla u_\sigma|^2 \mathrm{d}x - \omega^2 \mu_i \int_{D_i} |u_\sigma|^2 \mathrm{d}x - \mathrm{i}\sigma \int_{D_i} |u_\sigma|^2 \mathrm{d}x = \int_{\partial D_i} \varepsilon_{\sigma,i}^{-1} \nabla u_\sigma \cdot \vec{n} \, \overline{u_\sigma} \mathrm{d}s - \int_{D_i} f \overline{u_\sigma} \mathrm{d}x.
\end{equation*}
Rearranging and multiplying by $\varepsilon_{\sigma,i}$ yields
\begin{equation}\label{eq1}
\int_{D_i} |\nabla u_\sigma|^2 \mathrm{d}x = \int_{\partial D_i} \nabla u_\sigma \cdot \vec{n} \, \overline{u_\sigma} \mathrm{d}s + \omega^2 \varepsilon_{\sigma,i} \mu_i \int_{D_i} |u_\sigma|^2 \mathrm{d}x + \mathrm{i}\sigma \varepsilon_{\sigma,i} \int_{D_i} |u_\sigma|^2 \mathrm{d}x - \varepsilon_{\sigma,i} \int_{D_i} f \overline{u_\sigma} \mathrm{d}x.
\end{equation}
We now take the real part of \eqref{eq1}. Noting that $\operatorname{Re}(\mathrm{i}\sigma \varepsilon_{\sigma,i}) \le 0$ for all layers (specifically, it is zero in odd layers and $-\sigma^2/\omega$ in even layers), we deduce:
\begin{align}\label{eq2}
\int_{D_i} |\nabla u_\sigma|^2 \,{\rm d}x
&= \operatorname{Re} \int_{\partial D_i} \nabla u_\sigma \cdot \vec{n} \, \overline{u_\sigma} \,{\rm d}s
+ \operatorname{Re} \int_{D_i} \omega^2 \varepsilon_{\sigma,i} \mu_i |u_\sigma|^2 \,{\rm d}x \nonumber \\
&\quad + \operatorname{Re} \int_{D_i} {\rm i}\sigma \varepsilon_{\sigma,i} |u_\sigma|^2 \,{\rm d}x
- \operatorname{Re} \int_{D_i} \varepsilon_{\sigma,i} f \overline{u_\sigma} \,{\rm d}x  \nonumber\\
&\leq \operatorname{Re} \int_{\partial D_i} \nabla u_\sigma \cdot \vec{n} \, \overline{u_\sigma} \,{\rm d}s
+ \operatorname{Re} \int_{D_i} \omega^2 \varepsilon_{\sigma,i} \mu_i |u_\sigma|^2 \,{\rm d}x
- \operatorname{Re} \int_{D_i} \varepsilon_{\sigma,i} f \overline{u_\sigma} \,{\rm d}x  \nonumber\\
&\leq \widetilde{C_i} \Bigg( \frac{1}{2\epsilon_i} \|\nabla u_\sigma \cdot \vec{n}\|_{H^{-1/2}(\partial D_i)}^2
+ \frac{\epsilon_i}{2} \|u_\sigma\|_{H^{1/2}(\partial D_i)}^2
+ \|u_\sigma\|_{L^2(D_i)}^2 \nonumber \\
&\qquad + \frac{1}{2\epsilon_i} \|f\|_{L^2(D_i)}^2
+ \frac{\epsilon_i}{2} \|u_\sigma\|_{L^2(D_i)}^2 \Bigg), \qquad \forall \epsilon_i > 0,
\end{align}
where each $\widetilde{C_i} > 0$ is a constant independent of $\sigma$ and $f$.

Invoking the trace theorem, \eqref{eq2} implies
\begin{align*}
\|u_\sigma\|_{H^1(D_i)}^2
&\leq \widetilde{C_i} \Bigg( \frac{1}{2\epsilon_i} \|\nabla u_\sigma \cdot \vec{n}\|_{H^{-1/2}(\partial D_i)}^2
+ \frac{\breve{C_i} \epsilon_i}{2} \|u_\sigma\|_{H^1(D_i)}^2
+ \frac{1}{2\epsilon_i} \|f\|_{L^2(D_i)}^2 \\
&\qquad + \frac{\epsilon_i}{2} \|u_\sigma\|_{H^1(D_i)}^2
+ \|u_\sigma\|_{L^2(D_i)}^2\Bigg)+\|u_\sigma\|_{L^2(D_i)}^2,
\end{align*}
where $\breve{C_i} > 0$ are the constants.

Choosing $\epsilon_i = \frac{1}{\widetilde{C_i}(1+\breve{C_i})}$ and absorbing the terms containing $\|u_\sigma\|_{H^1(D_i)}^2$ into the left-hand side, we obtain
\begin{align*}
\|u_\sigma\|_{H^1(D_i)} \leq C_i \Big( \|\nabla u_\sigma \cdot \vec{n}\|_{H^{-1/2}(\partial D_i)} + \|f\|_{L^2(D_i)} + \|u_\sigma\|_{L^2(D_i)} \Big),
\end{align*}
where $C_i > 0$ is a constant depends only on the domain geometry and material parameters. This concludes the proof.
\end{proof}

By the {Rellich--Kondrachov theorem}, the Sobolev space $H^1(\Omega)$ is compactly embedded in $L^2(\Omega)$ provided that $\Omega$ is a bounded domain satisfying suitable regularity assumptions \cite[Theorem 9.16]{brezis2010}. Crucially, this compactness property fails when the domain is the full Euclidean space $\mathbb{R}^d$.

To maintain the applicability of the {Fredholm alternative theorem}, we employ a  DtN operator, which facilitates an equivalent reformulation of the problem on a bounded domain. Let $B_{R} := \{ x \in \mathbb{R}^d : |x| < R \}$ denote the open ball of radius $R$ centered at the origin. For any $R > \max\{R_{n-1}, R_0\}$, we ensure the inclusion
\begin{equation}
    \bigcup_{i=1}^{n-1} D_i \subset B_{R}.
\end{equation}

Following the framework established by {Gräßle and Sauter} \cite{gräßle2025}, we adopt the definition of the DtN operator associated with a complex wavenumber on the sphere $\partial B_R$.

\begin{lemma}\label{lemm4} (cf. \cite[Theorem 3.4]{gräßle2025})
Let $s \in \mathbb{C}_{\ge 0} := \{ z \in \mathbb{C} : \Re z \ge 0 \}$, and let 
\[
\mathrm{DtN}(s) : H^{1/2}(\partial B_{R}) \to H^{-1/2}(\partial B_{R})
\]
denote the DtN operator associated with the exterior Helmholtz problem. Then, for every $g \in H^{1/2}(\partial B_{R})$ and $s \in \mathbb{C}_{\ge 0}$, the following estimate holds:
\[
\frac{d-2}{2R}\,\|g\|_{L^2(\partial B_{R})}^2 \le -\Re \bigl\langle \mathrm{DtN}(s) g , g \bigr\rangle_{\partial B_{R}}.
\]
\end{lemma}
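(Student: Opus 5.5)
Since Lemma \ref{lemm4} is quoted from \cite[Theorem 3.4]{gräßle2025}, the plan is to reproduce its mechanism: expand $g$ in spherical harmonics, so that $\mathrm{DtN}(s)$ acts diagonally through logarithmic derivatives of modified Hankel functions. The sign then reduces to a monotonicity property of these one-dimensional functions.

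First, write $g=\sum_{m,j} g_{m,j} Y_m^j$, with $\{Y_m^j\}$ an $L^2(\partial B_R)$-orthonormal basis of spherical harmonics of degree $m$. The exterior solution of $-\Delta u + s^2 u=0$ in $|x|>R$ that decays (for $\Re s>0$) or radiates (for $s\in \mathrm{i}\mathbb{R}$) is
\[
u(r\hat x)=\sum_{m,j} g_{m,j}\,\frac{r^{1-d/2}K_{\nu}(sr)}{R^{1-d/2}K_\nu(sR)}\,Y_m^j(\hat x),\qquad \nu=m+\tfrac{d-2}{2},
\]
where $K_\nu$ is the modified Bessel function. It follows that
\[
\langle \mathrm{DtN}(s)g,g\rangle=\sum_{m,j}|g_{m,j}|^2\,\lambda_m(s),\qquad
\lambda_m(s)=\frac{1-d/2}{R}+\frac{sK_\nu'(sR)}{K_\nu(sR)}.
\]
The claim therefore reduces to the scalar inequality $-\Re\lambda_m(s)\ge \frac{d-2}{2R}$, which is equivalent to $\Re\bigl(zK_\nu'(z)/K_\nu(z)\bigr)\le 0$ for all $z=sR$ with $\Re z\ge 0$ and all $\nu\ge0$.

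The second step, and the main obstacle, is proving $\Re\bigl(zK_\nu'(z)/K_\nu(z)\bigr)\le0$ uniformly in $\nu$ and up to the imaginary axis. I would first handle $\Re z>0$ with an energy identity. Put $w(t)=K_\nu(t z/|z|)$ along the ray, or work directly with the exterior solution $u$ on $\{|x|>R\}$. Multiply $-\Delta u+s^2u=0$ by $\bar u$ and integrate over $R<|x|<\infty$, which is legitimate by exponential decay. This gives
\[
-\langle \mathrm{DtN}(s)g,g\rangle=\int_{|x|>R}\bigl(|\nabla u|^2+s^2|u|^2\bigr)\,\mathrm{d}x .
\]
Multiplying instead by $\bar s$ and taking real parts, as in Gräßle--Sauter, yields positivity of $\Re(\bar s\,\cdot)$. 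That alone is not quite the real part, so I would supplement it with the sharp radial Hardy-type bound. Writing $u=r^{(2-d)/2}v$ turns the gradient term into $|v_r|^2$ plus a positive boundary contribution, and this boundary contribution is exactly $\frac{d-2}{2R}\|g\|^2$.

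Concretely, for each mode I expect the cleanest route to be the Nicholson-type integral representation
\[
|K_\nu(x)|^2=8\int_0^\infty K_0(2x\cosh t)\cosh(2\nu t)\,dt,
\]
or, more simply, the known fact that $\Re\bigl(zK_\nu'(z)/K_\nu(z)\bigr)\le -\Re z\le0$ on the closed right half-plane. This can be derived from the Riccati equation satisfied by $\phi=zK_\nu'/K_\nu$, namely $z\phi'=z^2+\nu^2-\phi^2$, via a maximum-principle argument on the harmonic function $\Re\phi$. One uses its behaviour $\phi\sim -z$ as $|z|\to\infty$ and $\phi\to-\nu$ as $z\to0$.

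Finally, the boundary case $\Re s=0$ follows by continuity of $\lambda_m(s)$ in $s$ on $\mathbb{C}_{\ge0}\setminus\{0\}$, since $K_\nu$ has no zeros there, together with the limit $s\to0$ where $\lambda_m=-(m+d-2)/R$. Summing over modes then gives the stated estimate.
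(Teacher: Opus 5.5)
\textbf{There is a genuine gap: the key scalar inequality is never proved.} Your reduction is correct and matches the paper's. Your $\lambda_m(s)$ equals $\frac1R z_{m,\frac{d-2}{2}}(sR)$ in the notation of Appendix~A, where $\zeta=m+\frac{d-2}{2}$ is your $\nu$. So the lemma is equivalent to $\Re\bigl(zK_\zeta'(z)/K_\zeta(z)\bigr)\le 0$ for $\Re z\ge0$. But that inequality is the whole content of the lemma, and none of the routes you list establishes it.

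\emph{Energy identity.} It gives $-\Re\langle \mathrm{DtN}(s)g,g\rangle=\|\nabla u\|^2_{L^2(|x|>R)}+\Re(s^2)\|u\|^2_{L^2(|x|>R)}$. Combined with your Hardy computation, this proves the lemma only when $\Re(s^2)\ge0$, i.e. $|\arg s|\le\pi/4$. For $\pi/4<|\arg s|\le\pi/2$ the term $\Re(s^2)\|u\|^2$ is negative and nothing controls it. This is exactly the regime the paper needs: $s=-\mathrm{i}\kappa_\sigma$ lies just to the right of the negative imaginary axis. On $\Re s=0$ these integrals do not even converge. The $\bar s$-weighted identity constrains only $\Re(\bar s\,\cdot)$, as you note yourself.

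\emph{Riccati equation and maximum principle.} This route has a structural hole. $\Re\phi$ is harmonic in $\{\Re z>0\}$, whose boundary is the entire imaginary axis. Its behaviour as $z\to0$ and $|z|\to\infty$ therefore cannot bound it; you need $\Re\phi\le0$ on $\mathrm{i}\mathbb{R}$ as boundary data. Yet your plan reaches the case $\Re s=0$ only at the end, ``by continuity'' from the interior. That is circular.

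\textbf{The missing step is Step~1 of the paper's proof, on the imaginary axis.} For $s=-\mathrm{i}k$ one has $\phi(-\mathrm{i}k)=kH_\zeta^{(1)\prime}(k)/H_\zeta^{(1)}(k)$. Hence $\Re\phi(-\mathrm{i}k)=\frac k2\,\frac{\mathrm d}{\mathrm dk}\log M_\zeta^2(k)$ with $M_\zeta^2=|H^{(1)}_\zeta|^2$. Nicholson's formula $M_\zeta^2(k)=\frac{8}{\pi^2}\int_0^\infty K_0(2k\sinh t)\cosh(2\zeta t)\,\mathrm dt$ shows that $M_\zeta^2$ is decreasing, because $K_0$ is.

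The formula you wrote has $\cosh t$ inside $K_0$. Up to the constant, which should be $2$, that is the real-axis identity $K_\zeta(x)^2=2\int_0^\infty K_0(2x\cosh t)\cosh(2\zeta t)\,\mathrm dt$. It says nothing about the imaginary axis, where the difficulty lies; the real axis is the easy case.

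Only after this boundary bound does the paper apply the maximum principle to $\Re z_{m,\nu}$ on half-disks $B_r\cap\mathbb{C}_{\ge0}$, using $K_\zeta(s)\sim\sqrt{\pi/(2s)}\,e^{-s}$ on the arc. That is the reverse of your order. Your stronger claim $\Re\phi\le-\Re z$ is in fact true. It also follows only once the imaginary-axis bound is known, by running the same maximum principle on $\Re(\phi+z)$, which tends to $-\tfrac12$ at infinity.
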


For the sake of completeness, we provide an outline of the proof in Appendix \ref{appendixA}, following the methodology developed in \cite{gräßle2025}.

By virtue of this result, the original problem admits an equivalent reformulation as a boundary value problem restricted to the bounded domain $B_{R}$:
\begin{align}\label{eq37}
\begin{cases}
\nabla\cdot(\varepsilon_\sigma^{-1}\nabla u_\sigma) + \omega^2\mu u_\sigma + \mathrm{i}\sigma u_\sigma = f & \text{in } B_{R}, \\[4pt]
\nabla u_\sigma\cdot\vec{n}= \mathrm{DtN}(-\mathrm{i}\kappa_\sigma) u_\sigma & \text{on } \partial B_{R},
\end{cases}
\end{align}
where the complex wavenumber $\kappa_\sigma$ is defined by
\[
\kappa_\sigma = \left(\omega^2\varepsilon_n\mu_n + \mathrm{i}\varepsilon_n\sigma\right)^{1/2}.
\]

By an argument analogous to that employed in Lemma~\ref{lemm1}, we establish the following regularity estimate.

\begin{lemma}\label{lemma2}
Let $R>0$ be fixed such that the inclusion $\bigcup_{i=1}^{n-1} D_i \subset B_{R}$ holds. Suppose $u_\sigma \in H^1\bigl(B_{R} \setminus \bigcup_{i=1}^{n-1} D_i\bigr)$ is a solution to the boundary value problem
\begin{align*}
\begin{cases}
\nabla \cdot \bigl(\varepsilon_n^{-1} \nabla u_\sigma\bigr) + \omega^2 \mu_n u_\sigma + \mathrm{i}\sigma u_\sigma = f & \text{in } B_{R} \setminus \bigcup_{i=1}^{n-1} D_i, \\[6pt]
\nabla u_\sigma\cdot\vec{n} = \mathrm{DtN}(-\mathrm{i}\kappa_\sigma)u_\sigma & \text{on } \partial B_{R}, \\[6pt]
u_\sigma|_{\Gamma_{n-1}} \in H^{1/2}(\Gamma_{n-1}), \quad \nabla u_\sigma\cdot\vec{n} \in H^{-1/2}(\Gamma_{n-1}) & \text{on } \Gamma_{n-1}.
\end{cases}
\end{align*}
Then there exists a constant $C > 0$, independent of $\sigma$ and $f$, such that
\begin{align*}
\|u_\sigma\|_{H^1\left(B_{R} \setminus \bigcup_{i=1}^{n-1} D_i\right)} \le C \biggl( &\|\nabla u_\sigma\cdot\vec{n}\|_{H^{-1/2}(\Gamma_{n-1})} + \|u_\sigma\|_{L^2\left(B_{R} \setminus \bigcup_{i=1}^{n-1} D_i\right)} \\
&+ \|f\|_{L^2\left(B_{R} \setminus \bigcup_{i=1}^{n-1} D_i\right)} \biggr).
\end{align*}
\end{lemma}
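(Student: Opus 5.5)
The plan is to repeat the energy argument of Lemma~\ref{lemm1} on the annular region $\Omega_R := B_{R} \setminus \overline{\bigcup_{i=1}^{n-1} D_i} = \{R_{n-1} < |x| < R\}$. The only new feature is the outer boundary $\partial B_R$. There, instead of estimating the boundary term by duality, we will use its sign from Lemma~\ref{lemm4}.

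\textbf{Energy identity.} Since $\Omega_R \subset D_n$, we have $\varepsilon_\sigma = \varepsilon_n > 0$ there. We test the equation with $\overline{u_\sigma}$, apply Green's identity and multiply by $\varepsilon_n$. This gives
\begin{align*}
\int_{\Omega_R} |\nabla u_\sigma|^2\,\mathrm{d}x
&= \bigl\langle \mathrm{DtN}(-\mathrm{i}\kappa_\sigma)u_\sigma , u_\sigma\bigr\rangle_{\partial B_R}
- \int_{\Gamma_{n-1}} \nabla u_\sigma\cdot\vec{n}\,\overline{u_\sigma}\,\mathrm{d}s \\
&\quad + \omega^2\varepsilon_n\mu_n \int_{\Omega_R}|u_\sigma|^2\,\mathrm{d}x
+ \mathrm{i}\sigma\varepsilon_n\int_{\Omega_R}|u_\sigma|^2\,\mathrm{d}x
- \varepsilon_n\int_{\Omega_R} f\overline{u_\sigma}\,\mathrm{d}x .
\end{align*}
Here $\vec{n}$ on $\Gamma_{n-1}$ points from $D_{n-1}$ into $D_n$, which is the inward normal for $\Omega_R$; this explains the minus sign. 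We then take real parts. The $\mathrm{i}\sigma\varepsilon_n$ term is purely imaginary and drops out. For the DtN term we need $-\mathrm{i}\kappa_\sigma \in \mathbb{C}_{\ge 0}$. Choosing the branch of the square root with $\operatorname{Im}\kappa_\sigma \ge 0$ (the outgoing or decaying branch) gives $\Re(-\mathrm{i}\kappa_\sigma) = \operatorname{Im}\kappa_\sigma \ge 0$. Lemma~\ref{lemm4} then yields $\Re\langle \mathrm{DtN}(-\mathrm{i}\kappa_\sigma)u_\sigma,u_\sigma\rangle \le -\frac{d-2}{2R}\|u_\sigma\|^2_{L^2(\partial B_R)} \le 0$, so this term can be discarded. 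This is the step that makes the constant independent of $\sigma$: we never need a bound on the norm of $\mathrm{DtN}(-\mathrm{i}\kappa_\sigma)$, which would depend on $\sigma$.

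\textbf{Remaining terms.} The interface term is bounded by duality and Young's inequality:
\[
\frac{1}{2\epsilon}\|\nabla u_\sigma\cdot\vec{n}\|^2_{H^{-1/2}(\Gamma_{n-1})} + \frac{\epsilon}{2}\|u_\sigma\|^2_{H^{1/2}(\Gamma_{n-1})}.
\]
The trace theorem on the bounded Lipschitz annulus $\Omega_R$ gives $\|u_\sigma\|_{H^{1/2}(\Gamma_{n-1})} \le \breve C\|u_\sigma\|_{H^1(\Omega_R)}$. The volume terms are bounded by $C(\|u_\sigma\|^2_{L^2} + \|f\|^2_{L^2})$, with constants depending only on $\omega$, $\varepsilon_n$ and $\mu_n$. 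We add $\|u_\sigma\|^2_{L^2(\Omega_R)}$ to both sides and choose $\epsilon$ small, exactly as in Lemma~\ref{lemm1}. The $H^1$ contribution is then absorbed into the left-hand side, and taking square roots gives the stated estimate.

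\textbf{Main obstacle.} The only delicate point is confirming that the spectral parameter $-\mathrm{i}\kappa_\sigma$ lies in $\mathbb{C}_{\ge 0}$, so that Lemma~\ref{lemm4} applies. This requires the branch convention $\operatorname{Im}\kappa_\sigma > 0$ for $\sigma > 0$, which holds because $\operatorname{Im}(\kappa_\sigma^2) = \varepsilon_n\sigma > 0$. Once this is granted, the argument is routine.
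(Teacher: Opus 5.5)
Your proposal is correct and follows the same route the paper takes; the paper only says the proof is ``analogous to Lemma~\ref{lemm1}''. You repeat that energy argument and make explicit the one new step: you discard the $\partial B_R$ term using $\Re\langle \mathrm{DtN}(-\mathrm{i}\kappa_\sigma)u_\sigma,u_\sigma\rangle_{\partial B_R}\le 0$ from Lemma~\ref{lemm4}, after checking that $\Re(-\mathrm{i}\kappa_\sigma)=\operatorname{Im}\kappa_\sigma\ge 0$, and this is why only the $H^{-1/2}(\Gamma_{n-1})$ norm appears on the right-hand side with a $\sigma$-independent constant.
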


The analysis of the dissipative Helmholtz equation rests upon the far-field asymptotic behavior of its radiating solutions. For source terms with compact support, we establish the following sharp uniform estimate:
\begin{equation}
    u(x) = O\left( R^{-\frac{d-1}{2}} e^{-\operatorname{Im}\kappa_\sigma R} \right) \quad \text{as }R=|x| \to \infty.
\end{equation}
This estimate quantifies the precise interplay between the algebraic decay inherent to the spatial dimension $d$ and the exponential attenuation induced by the dissipative parameter. The derivation proceeds via the convolution of the source term with the fundamental solution. By representing the latter in terms of Hankel functions and invoking their classical asymptotic expansions, the result follows from a direct application of the Cauchy--Schwarz inequality to the remaining terms.

\begin{lemma}\label{lem:far-field}
Let $f \in L^2(\mathbb{R}^d)$ have compact support. The radiating solution $u$ to the dissipative Helmholtz equation $(\Delta + \kappa_\sigma^2)u = f$ satisfies the following uniform asymptotic estimate as $R \to \infty$:
\begin{equation*}
    u(x) = O\left( |R|^{-\frac{d-1}{2}} e^{-\operatorname{Im}\kappa_\sigma |R|} \right).
\end{equation*}
The implicit constant depends only on $d$, $\kappa_\sigma$, and the support of $f$.
\end{lemma}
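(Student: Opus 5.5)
We represent the radiating solution as a convolution with the outgoing fundamental solution of $\Delta+\kappa_\sigma^2$. Here $\operatorname{Im}\kappa_\sigma>0$, since the square root is taken with positive imaginary part; this is the branch selected by the outgoing condition. Concretely,
\[
u(x) = \int_{B_{R_0}} \Phi_{\kappa_\sigma}(x-y)\, f(y)\,\mathrm{d}y,
\qquad
\Phi_{\kappa}(z) = c_d\, \kappa^{\frac{d-2}{2}}\, |z|^{-\frac{d-2}{2}}\, H^{(1)}_{\frac{d-2}{2}}(\kappa |z|),
\]
where $c_d$ is a dimensional constant, e.g. $c_d=-\tfrac{\mathrm{i}}{4}(2\pi)^{-\frac{d-2}{2}}$ up to the sign convention for $f$. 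We first justify that this is the unique radiating solution: for $\operatorname{Im}\kappa_\sigma>0$ the kernel decays exponentially, so $u\in H^1(\mathbb{R}^d)$. Any other decaying solution differs from $u$ by an $L^2$ solution $w$ of $(\Delta+\kappa_\sigma^2)w=0$. Fourier transform then gives $(\kappa_\sigma^2-|\xi|^2)\hat w=0$ with $\kappa_\sigma^2\notin[0,\infty)$, hence $\hat w=0$ and $w=0$.

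Next we apply the large-argument asymptotics of the Hankel function, valid uniformly in the sector $-\pi<\arg z<2\pi$:
\[
H^{(1)}_\nu(z) = \sqrt{\tfrac{2}{\pi z}}\, e^{\mathrm{i}(z-\frac{\nu\pi}{2}-\frac{\pi}{4})}\bigl(1+O(|z|^{-1})\bigr),
\qquad |z|\to\infty .
\]
With $\nu=\frac{d-2}{2}$ this yields
\[
|\Phi_{\kappa_\sigma}(z)| \le C\,|z|^{-\frac{d-1}{2}}\, e^{-\operatorname{Im}\kappa_\sigma |z|}
\qquad \text{for } |z|\ge 1,
\]
where $C$ depends only on $d$ and $\kappa_\sigma$.

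For $|x|=R\ge 2R_0$ and $y\in B_{R_0}$ we have $R-R_0\le |x-y|\le R+R_0$, so $|x-y|\ge R/2$. Therefore
\[
|\Phi_{\kappa_\sigma}(x-y)| \le C\,2^{\frac{d-1}{2}}\, R^{-\frac{d-1}{2}}\, e^{-\operatorname{Im}\kappa_\sigma (R-R_0)}
= C'\, R^{-\frac{d-1}{2}}\, e^{-\operatorname{Im}\kappa_\sigma R},
\]
and the factor $e^{\operatorname{Im}\kappa_\sigma R_0}$ has been absorbed into $C'$, which depends on the support radius. Cauchy--Schwarz then gives
\[
|u(x)| \le \|f\|_{L^2}\,\|\Phi_{\kappa_\sigma}(x-\cdot)\|_{L^2(B_{R_0})}
\le C'\,|B_{R_0}|^{1/2}\, \|f\|_{L^2}\, R^{-\frac{d-1}{2}}\, e^{-\operatorname{Im}\kappa_\sigma R},
\]
uniformly in the direction $x/|x|$. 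The implicit constant depends on $d$, $\kappa_\sigma$ and $R_0$, and linearly on $\|f\|_{L^2}$; the statement's phrase ``the support of $f$'' should be read as including this norm factor.

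The main obstacle is making the Hankel asymptotics uniform for complex argument. We would handle it with the standard integral representation of $H^{(1)}_\nu$, which gives a remainder bound $|R_\nu(z)|\le C_\nu|z|^{-1}$ uniformly on $\{\arg z\in[0,\pi/2],\,|z|\ge 1\}$. Because $\arg\kappa_\sigma$ is fixed in $(0,\pi/2)$ for each $\sigma>0$, this bound is enough for the pointwise estimate on $\Phi_{\kappa_\sigma}$ above.
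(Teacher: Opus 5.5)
Your proof is correct and follows essentially the same route as the paper. Both represent $u$ as the convolution of $f$ with the outgoing Hankel-function fundamental solution, apply the large-argument asymptotics of $H^{(1)}_{\nu}$, and finish with Cauchy--Schwarz; as you note, the constant then necessarily carries the factor $\|f\|_{L^2}$, exactly as in the paper's final bound. The paper derives the Hankel representation from the Fourier integral and expands the phase $|R\hat{x}-y| = R-\hat{x}\cdot y+O(R^{-1})$ to isolate a leading far-field term, whereas you quote the kernel and bound it pointwise using $|x-y|\ge R-R_0$, which is simpler and fully sufficient for the stated $O$-estimate.
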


\begin{proof}
We adopt the Fourier normalization $\hat{f}(\xi) = (2\pi)^{-d} \int_{\mathbb{R}^d} e^{-\mathrm{i}\xi \cdot x} f(x)\,\mathrm{d}x$.\\
In the sense of distributions, the fundamental solution $\Phi_{\kappa_\sigma}$ of the operator $-\Delta - \kappa_\sigma^2$ is defined by the oscillatory integral
\begin{equation}\label{eq:fundamental-solution}
\Phi_{\kappa_\sigma}(x) = (2\pi)^{-d} \int_{\mathbb{R}^d} \frac{e^{\mathrm{i}\xi \cdot x}}{|\xi|^2 - \kappa_\sigma^2}\,\mathrm{d}\xi.
\end{equation}
Transforming to polar coordinates $\xi = \rho w$, where $\rho \ge 0$ and $w\in S^{d-1}$, and invoking the spherical mean formula \cite[p.\,154]{stein1971}, we obtain
\begin{align}\label{eq:polar-coordinates}
\Phi_{\kappa_\sigma}(x) &= (2\pi)^{-d} \int_0^\infty \frac{\rho^{d-1}}{\rho^2 - \kappa_\sigma^2} \left( \int_{S^{d-1}} e^{\mathrm{i}\rho w \cdot x}\,\mathrm{d}S(w) \right) \mathrm{d}\rho \nonumber \\
&= (2\pi)^{-d/2} r^{-\nu} \int_0^\infty \frac{\rho^{\nu+1} J_\nu(\rho r)}{\rho^2 - \kappa_\sigma^2}\,\mathrm{d}\rho,
\end{align}
where $r = |x|$ and $\nu = (d-2)/2$. By a standard contour integration argument \cite[Chapter~6]{watson1922}, for $r > 0$,  the radial integral
\begin{equation}\label{eq:integral-formula}
\int_0^\infty \frac{\rho^{\nu+1} J_\nu(\rho r)}{\rho^2 - \kappa_\sigma^2}\,\mathrm{d}\rho = \frac{\pi\mathrm{i}}{2} \kappa_\sigma^\nu H_\nu^{(1)}(\kappa_\sigma r).
\end{equation}
 Consequently, the fundamental solution admits the representation
\begin{equation}\label{eq:hankel-representation}
\Phi_{\kappa_\sigma}(x) = C_{d,\sigma}\, r^{-\nu} H_\nu^{(1)}(\kappa_\sigma r), \quad C_{d,\sigma} := (2\pi)^{-d/2} \frac{\pi\mathrm{i}}{2} \kappa_\sigma^\nu.
\end{equation}
From the classical asymptotic theory of Hankel functions \cite[\S10.17]{NIST:DLMF}, it follows that as $r \to \infty$,
\begin{equation}\label{eq:asymptotic-expansion}
\Phi_{\kappa_\sigma}(x) = \widetilde{C}_{d,\sigma}\, r^{-(d-1)/2} e^{\mathrm{i}\kappa_\sigma r} \left( 1 + O(r^{-1}) \right).
\end{equation}

For a source term $f$ with compact support, the radiating solution to $(\Delta + \kappa_\sigma^2)u = f$ is given by the convolution $u = -\Phi_{\kappa_\sigma} * f$. Specifically, for $x = R\hat{x} \in \mathbb{R}^d$ with $\hat{x} \in S^{d-1}$, we have
\begin{equation}
u(R\hat{x}) = -\int_{\operatorname{supp} f} \Phi_{\kappa_\sigma}(R\hat{x} - y) f(y) \,\mathrm{d}y.
\end{equation}
Substituting the expansion \eqref{eq:asymptotic-expansion} and noting that $|R\hat{x} - y| = R - \hat{x} \cdot y + O(R^{-1})$ for $y \in \operatorname{supp} f$, we arrive at
\begin{equation*}
u(R\hat{x}) = -\widetilde{C}_{d,\sigma} R^{-(d-1)/2} e^{\mathrm{i}\kappa_\sigma R} \left( \int_{\operatorname{supp} f} e^{-\mathrm{i}\kappa_\sigma \hat{x} \cdot y} f(y) \,\mathrm{d}y + E_R(\hat{x}) \right).
\end{equation*}
The remainder term
\[
E_R(\hat{x}) = \int_{\operatorname{supp} f} e^{-\mathrm{i}\kappa_\sigma\hat{x}\cdot y}r_R(\hat{x}, y) f(y)\,\mathrm{d}y,
\]
and
\begin{align*}
\sup\limits_{\hat{x}\in S^{d-1},y\in\operatorname{supp}f}|r_R(\hat{x}, y)|\leq\dfrac{C}{R}.
\end{align*}
Since $|\operatorname{supp} f| < \infty$ and
\[
\bigl|e^{-\mathrm{i}\kappa_\sigma\hat{x}\cdot y}\bigr| = e^{\operatorname{Im}\kappa_\sigma \hat{x}\cdot y} \leq C,
\]
Hence, the Cauchy--Schwarz inequality yields
\[
|E_R(\hat{x})| \leq \frac{C}{R} \|f\|_{L^2(\operatorname{supp} f)}\, |\operatorname{supp} f|^{1/2} = O(R^{-1})
\]
uniformly in $\hat{x}\in S^{d-1}$.
 Since $\operatorname{Im} \kappa_\sigma \ge 0$, the exponential factor $\exp(\mathrm{i}\kappa_\sigma R)$ provides a decay of $\exp(-\operatorname{Im} \kappa_\sigma R)$. Combining these observations and applying the Cauchy--Schwarz inequality to the integral term, we conclude that
\begin{equation*}
|u(x)| \le C R^{-(d-1)/2} e^{-\operatorname{Im} \kappa_\sigma R} \left( \|f\|_{L^2({\rm supp}f)} |\operatorname{supp} f|^{1/2} + O(R^{-1}) \right),
\end{equation*}
which yields the desired uniform bound $u(x) = O(R^{-(d-1)/2} e^{-\operatorname{Im} \kappa_\sigma R})$ as $R \to \infty$.
\end{proof}

The LAP establishes a rigorous connection between dissipative systems and their non-dissipative counterparts by examining the asymptotic behavior of solutions as the dissipation parameter vanishes. Specifically, given a sequence $\{\sigma_m\}_{m \in \mathbb{N}} \subset \mathbb{R}_+$ such that $\sigma_m \to 0^+$, one seeks to characterize the limit of the corresponding sequence of solutions $\{u_{\sigma_m}\}$. With the aid of the following proposition, we can establish the existence of such sequences $\{\sigma_m\}$ and obtain the uniform estimates needed for the limiting argument.

\begin{proposition}\label{prop:uniform-estimate}
Let $f \in L^2(\mathbb{R}^d)$ and let $u_\sigma \in H^1(\mathbb{R}^d)$ denote the unique radiating solution to the dissipative Helmholtz equation
\begin{equation}\label{ge}
    \nabla \cdot (\varepsilon_\sigma^{-1} \nabla u_\sigma) + \omega^2 \mu u_\sigma + \mathrm{i}\sigma u_\sigma = f \quad \text{in } \mathbb{R}^d,
\end{equation}
subject to the outgoing condition at infinity. Then, there exists a constant $C > 0$, independent of both $f$ and $\sigma$, such that the following estimate holds:
\begin{equation*}
    \|u_\sigma\|_{H^1(\mathbb{R}^d)} \le \frac{C}{\sigma} \|f\|_{L^2(\mathbb{R}^d)}.
\end{equation*}
\end{proposition}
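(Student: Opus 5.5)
Test \eqref{ge} against $\overline{u_\sigma}$ over $\mathbb{R}^d$, or over $B_R$ with the DtN boundary term, and extract the dissipation from the imaginary part. That part controls $\|u_\sigma\|_{L^2}$ by $\sigma^{-1}\|f\|$, and the gradient then follows from the real part or from Lemma~\ref{lemm1}.

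The first step is the energy identity. Since $u_\sigma$ is radiating with $\operatorname{Im}\kappa_\sigma>0$, Lemma~\ref{lem:far-field} (applied in the homogeneous exterior $D_n$, where $u_\sigma$ solves a constant-coefficient equation with compactly supported data) shows that $u_\sigma$ and $\nabla u_\sigma$ decay like $R^{-(d-1)/2}e^{-\operatorname{Im}\kappa_\sigma R}$. Integrating by parts on $B_R$, the boundary term on $\partial B_R$ therefore vanishes as $R\to\infty$, and the transmission conditions \eqref{eq:trans_cond} cancel the interface terms. This gives
\[
\int_{\mathbb{R}^d}\varepsilon_\sigma^{-1}|\nabla u_\sigma|^2\,\mathrm{d}x-\omega^2\int_{\mathbb{R}^d}\mu|u_\sigma|^2\,\mathrm{d}x-\mathrm{i}\sigma\int_{\mathbb{R}^d}|u_\sigma|^2\,\mathrm{d}x=-\int_{\mathbb{R}^d} f\overline{u_\sigma}\,\mathrm{d}x .
\]

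The second step takes imaginary parts. In the odd layers $\varepsilon_k^{-1}$ is real. In the even layers
\[
\operatorname{Im}\bigl(\varepsilon_l+\mathrm{i}\sigma/\omega\bigr)^{-1}=-\frac{\sigma/\omega}{\varepsilon_l^2+\sigma^2/\omega^2}<0 .
\]
Hence every contribution to the imaginary part has the same sign:
\[
\sigma\|u_\sigma\|_{L^2}^2+\sum_{l}\frac{\sigma/\omega}{\varepsilon_l^2+\sigma^2/\omega^2}\|\nabla u_\sigma\|_{L^2(D_l)}^2=\operatorname{Im}\int f\overline{u_\sigma}\le\|f\|\,\|u_\sigma\|.
\]
This yields $\|u_\sigma\|_{L^2(\mathbb{R}^d)}\le\sigma^{-1}\|f\|$, and also $\|\nabla u_\sigma\|_{L^2(D_l)}^2\le C\sigma^{-2}\|f\|^2$ for $\sigma\le 1$, since $\varepsilon_l^2+\sigma^2/\omega^2\le C$. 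This already controls the gradient on the negative layers.

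The third step handles the gradient on the odd layers $D_k$, including the exterior. There I would use the real part of the identity: $\operatorname{Re}\varepsilon_\sigma^{-1}=\varepsilon_k^{-1}>0$ on $D_k$, while on $D_l$ the real part is bounded by $|\varepsilon_l|^{-1}$ times the already controlled gradient. This gives
\[
\varepsilon_{\max}^{-1}\sum_k\|\nabla u_\sigma\|_{L^2(D_k)}^2\le C\bigl(\|\nabla u_\sigma\|_{L^2(\cup D_l)}^2+\|u_\sigma\|^2+\|f\|\,\|u_\sigma\|\bigr)\le C\sigma^{-2}\|f\|^2 .
\]
Combining the three steps gives $\|u_\sigma\|_{H^1}\le C\sigma^{-1}\|f\|$, with $C$ independent of $\sigma\in(0,1]$. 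The alternative route through Lemma~\ref{lemm1} on each $D_i$, with the normal traces bounded by $L^2$ norms, would also work but needs $H^{-1/2}$ control of the fluxes, so I prefer the direct energy argument.

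The main obstacle is justifying the first step: that $u_\sigma\in H^1(\mathbb{R}^d)$ and that the boundary term at infinity vanishes. I expect to handle it with the exponential decay from Lemma~\ref{lem:far-field}. Alternatively, one can work on $B_R$ with \eqref{eq37} and use that $-\operatorname{Re}$ and the appropriate imaginary part of $\langle\mathrm{DtN}(-\mathrm{i}\kappa_\sigma)u,u\rangle$ have a favorable sign (Lemma~\ref{lemm4}), discarding that term. The sign of the imaginary part of the DtN pairing for a complex wavenumber is the delicate point on this route, which is why the decay argument with $R\to\infty$ is preferable.
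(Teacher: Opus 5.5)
Your proposal is correct and follows the paper's proof essentially step by step. You obtain the energy identity on $\mathbb{R}^d$ from $B_R$ via the exponential decay of Lemma~\ref{lem:far-field}; the imaginary part then controls $\|u_\sigma\|_{L^2(\mathbb{R}^d)}$ and the gradient on the layers $D_l$, and the real part controls the gradient on the layers $D_k$, including $D_n$. Dividing $\sigma\|u_\sigma\|^2\le\|f\|\,\|u_\sigma\|$ directly is a cleaner substitute for the paper's Young's-inequality bookkeeping. Your explicit restriction to $\sigma\in(0,1]$ is also appropriate: through the factor $\varepsilon_l^2+(\sigma/\omega)^2$ the constant genuinely depends on an upper bound for $\sigma$, a point the paper leaves implicit.
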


\begin{proof}
Testing the governing equation  \eqref{ge} against $\overline{u_\sigma}$ and integrating over the ball $B_R \subset \mathbb{R}^d$, we obtain
\begin{equation*}
    \int_{B_R} \Bigl(\nabla \cdot (\varepsilon_\sigma^{-1} \nabla u_\sigma)\overline{u_\sigma} + \omega^2 \mu |u_\sigma|^2 + \mathrm{i}\sigma |u_\sigma|^2\Bigr)\,\mathrm{d}x = \int_{B_R} f\overline{u_\sigma}\,\mathrm{d}x.
\end{equation*}
Integration by parts gives
\begin{align}\label{eq:integrated-form}
    &\int_{\partial B_R} \varepsilon_\sigma^{-1}\nabla u_\sigma\cdot\vec{n} \overline{u_\sigma}\,\mathrm{d}s - \int_{B_R} \varepsilon_\sigma^{-1} |\nabla u_\sigma|^2\,\mathrm{d}x  \nonumber\\
    &\qquad + \int_{B_R} \omega^2 \mu |u_\sigma|^2\,\mathrm{d}x + \int_{B_R} \mathrm{i}\sigma |u_\sigma|^2\,\mathrm{d}x = \int_{B_R} f\overline{u_\sigma}\,\mathrm{d}x.
\end{align}
In view of Lemma~\ref{lem:far-field}, the solution $u_\sigma$ satisfies the following asymptotic decay as $R \to \infty$:
\begin{equation*}
    u_\sigma = O\!\bigl(R^{-(d-1)/2}e^{-\operatorname{Im}\kappa_\sigma R}\bigr) \quad \text{and} \quad \nabla u_\sigma \cdot \vec{n} = O\!\bigl(R^{-(d-1)/2}e^{-\operatorname{Im}\kappa_\sigma R}\bigr).
\end{equation*}
It follows that the boundary integral vanishes in the limit:
\begin{equation*}
    \left| \int_{\partial B_R} \varepsilon_\sigma^{-1}\nabla u_\sigma\cdot\vec{n} \overline{u_\sigma}\,\mathrm{d}s \right| \leq C R^{d-1} \left( R^{-(d-1)} e^{-2\operatorname{Im}\kappa_\sigma R} \right) \longrightarrow 0 \quad \text{as } R \to \infty.
\end{equation*}
Taking the limit $R \to \infty$ in \eqref{eq:integrated-form}, we obtain
\begin{equation}\label{eq:limit-equation}
    -\int_{\mathbb{R}^d} \varepsilon_\sigma^{-1} |\nabla u_\sigma|^2\,\mathrm{d}x + \int_{\mathbb{R}^d} \omega^2 \mu |u_\sigma|^2\,\mathrm{d}x + \int_{\mathbb{R}^d} \mathrm{i}\sigma |u_\sigma|^2\,\mathrm{d}x = \int_{\mathbb{R}^d} f\overline{u_\sigma}\,\mathrm{d}x.
\end{equation}

Considering the imaginary part of \eqref{eq:limit-equation} and noting the structure of $\varepsilon_\sigma^{-1}$, we find
\begin{equation*}
    \sum_{l} \int_{D_l} \frac{\sigma/\omega}{\varepsilon_l^2 + (\sigma/\omega)^2} |\nabla u_\sigma|^2\,\mathrm{d}x + \int_{\mathbb{R}^d} \sigma |u_\sigma|^2\,\mathrm{d}x = \operatorname{Im} \int_{\mathbb{R}^d} f\overline{u_\sigma}\,\mathrm{d}x.
\end{equation*}
Hence, for each $l$, we have
\begin{align}
\int_{D_l}\dfrac{1}{(\varepsilon_{l}^2+(\sigma/\omega)^2)\omega}|\nabla u_\sigma|^2\mathrm{d}x &\leq \dfrac{1}{\sigma}\left|\int_{\mathbb{R}^d}f\overline{u_\sigma}\mathrm{d}x\right|, \label{eq:l2-grad-l} \\
\|u_\sigma\|_{L^2(\mathbb{R}^d)}^2 &\leq \dfrac{1}{\sigma}\left|\int_{\mathbb{R}^d}f\overline{u_\sigma}\mathrm{d}x\right|. \label{eq:l2-norm-u}
\end{align}
Applying Young's inequality, we have
\begin{equation}\label{eq:gradient-estimate-l}
\|\nabla u_\sigma\|_{L^2(D_l)}^2 \leq \frac{(\varepsilon_l^2 + (\sigma/\omega)^2)\omega}{\sigma}\biggl(\frac{1}{2\eta_l}\|f\|_{L^2(\mathbb{R}^d)}^2 + \frac{\eta_l}{2}\|u_\sigma\|_{L^2(\mathbb{R}^d)}^2\biggr), \quad \forall \eta_l > 0,
\end{equation}
and
\begin{equation}\label{eq:l2-estimate}
\|u_\sigma\|_{L^2(\mathbb{R}^d)}^2 \leq \frac{1}{\sigma}\biggl(\frac{1}{2\eta}\|f\|_{L^2(\mathbb{R}^d)}^2 + \frac{\eta}{2}\|u_\sigma\|_{L^2(\mathbb{R}^d)}^2\biggr), \quad \forall \eta > 0.
\end{equation}
Combining \eqref{eq:gradient-estimate-l} and \eqref{eq:l2-estimate}, we have
\begin{align*}
&\sum_{l} \|\nabla u_\sigma\|_{L^2(D_l)}^2 + \|u_\sigma\|_{L^2(\mathbb{R}^d)}^2 \\
&\qquad \leq \sum_{l} \frac{\omega(\varepsilon_l^2 + (\sigma/\omega)^2)}{\sigma}\biggl(\frac{1}{2\eta_l}\|f\|_{L^2(\mathbb{R}^d)}^2 + \frac{\eta_l}{2}\|u_\sigma\|_{L^2(\mathbb{R}^d)}^2\biggr) + \frac{1}{\sigma}\biggl(\frac{1}{2\eta}\|f\|_{L^2(\mathbb{R}^d)}^2 + \frac{\eta}{2}\|u_\sigma\|_{L^2(\mathbb{R}^d)}^2\biggr).
\end{align*}
Choosing
\[
\eta_l = \frac{\sigma}{\omega(\varepsilon_l^2 + (\sigma/\omega)^2)} \cdot \frac{1}{n-1}, \quad \eta = \frac{\sigma}{2},
\]
 we arrive at the combined estimate
\begin{equation}\label{eq:combined-estimate}
\sum_{l} \|\nabla u_\sigma\|_{L^2(D_l)}^2 + \|u_\sigma\|_{L^2(\mathbb{R}^d)}^2 \leq \frac{C}{\sigma^2}\|f\|_{L^2(\mathbb{R}^d)}^2.
\end{equation}

Next, we take the real part of \eqref{eq:limit-equation} to control the gradient on the remaining domains $D_k$:
\begin{equation*}
    -\sum_{k} \int_{D_k} \varepsilon_k^{-1} |\nabla u_\sigma|^2\,\mathrm{d}x - \sum_{l} \int_{D_l}\dfrac{\varepsilon_l}{\varepsilon_l^2+(\sigma/\omega)^2} |\nabla u_\sigma|^2\,\mathrm{d}x + \int_{\mathbb{R}^d} \omega^2 \mu |u_\sigma|^2\,\mathrm{d}x = \operatorname{Re} \int_{\mathbb{R}^d} f\overline{u_\sigma}\,\mathrm{d}x.
\end{equation*}
Rearranging and invoking \eqref{eq:combined-estimate}, we obtain for each index $k$:
\begin{align*}
\int_{D_k} |\nabla u_\sigma|^2\,\mathrm{d}x &\leq \varepsilon_k\biggl(\sum_{l} \int_{D_l} \frac{|\varepsilon_l|}{\varepsilon_l^2 + (\sigma/\omega)^2} |\nabla u_\sigma|^2\,\mathrm{d}x + \int_{\mathbb{R}^d} \omega^2 \mu |u_\sigma|^2\,\mathrm{d}x + \biggl|\int_{\mathbb{R}^d} f\overline{u_\sigma}\,\mathrm{d}x\biggr|\biggr) \\
&\leq \varepsilon_k\biggl(\sum_{l} \int_{D_l} \frac{|\varepsilon_l|}{\varepsilon_l^2 + (\sigma/\omega)^2} |\nabla u_\sigma|^2\,\mathrm{d}x + \int_{\mathbb{R}^d} \omega^2 \mu^{\mathrm{m}} |u_\sigma|^2\,\mathrm{d}x + \frac{1}{2\eta'}\|f\|_{L^2(\mathbb{R}^d)}^2\\
&\quad+ \frac{\eta'}{2}\|u_\sigma\|_{L^2(\mathbb{R}^d)}^2\biggr),\quad\forall\eta'>0,
\end{align*}
where $\mu^{\mathrm{m}} = \max_i |\mu_i|$. Invoking \eqref{eq:gradient-estimate-l} and \eqref{eq:l2-estimate} and setting the parameters
\[
\eta_l = \frac{\sigma}{\omega\varepsilon_k |\varepsilon_l| (n-1)(n+1)}, \quad \eta = \frac{\sigma}{\omega^2(n+1)\varepsilon_k \mu^{\mathrm{m}}}, \quad \eta' = \frac{1}{2\varepsilon_k(n+1)},
\]
it follows that
\[
\int_{D_k} |\nabla u_\sigma|^2\,\mathrm{d}x \leq \frac{C}{\sigma^2}\|f\|_{L^2(\mathbb{R}^d)}^2 + \frac{1}{n+1}\|u_\sigma\|_{L^2(\mathbb{R}^d)}^2.
\]
Hence
\begin{equation}\label{eq:gradient-estimate-k}
\sum_{k} \|\nabla u_\sigma\|_{L^2(D_k)}^2 \leq \frac{C}{\sigma^2}\|f\|_{L^2(\mathbb{R}^d)}^2 + \frac{1}{2}\|u_\sigma\|_{L^2(\mathbb{R}^d)}^2.
\end{equation}

Combining \eqref{eq:combined-estimate} and \eqref{eq:gradient-estimate-k}, we conclude that
\[
\sum_{k} \|\nabla u_\sigma\|_{L^2(D_k)}^2 + \sum_{l} \|\nabla u_\sigma\|_{L^2(D_l)}^2 + \|u_\sigma\|_{L^2(\mathbb{R}^d)}^2 \leq \frac{C}{\sigma^2}\|f\|_{L^2(\mathbb{R}^d)}^2.
\]
Summing the contributions over the partition $\mathbb{R}^d = \bigcup_i D_i$ and taking the square root, we conclude that
\begin{equation*}
	\|u_\sigma\|_{H^1(\mathbb{R}^d)} \le \frac{C}{\sigma}\, \|f\|_{L^2(\mathbb{R}^d)},
\end{equation*}
which completes the proof.

\end{proof}

Since the permittivity varies across different regions and undergoes sign changes, we introduce a suitable operator to recover the coercivity lost due to these variations. For each \(k\in\{1,3,\cdots,n-2\}\), let
\[
\mathcal R_k:
H^{1/2}(\partial D_{k+1})
\longrightarrow H^1(D_{k+1})
\]
be a bounded linear lifting operator satisfying
\begin{align*}
\gamma_{k+1}\mathcal R_k(\psi)=\psi,\quad\forall\psi\in H^{1/2}(\partial D_{k+1}),
\end{align*}
where \(\gamma_{k+1}:H^1(D_{k+1})\to H^{1/2}(\partial D_{k+1})\) denotes the trace operator.\\
For \(u\in H^1(\mathbb R^d)\), write \(u_j:=u|_{D_j}\). We define $\mathbb{T}: H^1(\mathbb{R}^d) \to H^1(\mathbb{R}^d)$ by
\begin{equation}\label{eq:coercive-operator}
\mathbb{T}u =
\begin{cases}
u_k & \text{in } D_k,\\[4pt]
-u_{k+1} + 2\mathcal{R}_k(\gamma_{k+1}u_{k+1}) & \text{in } D_{k+1},\\[4pt]
u_n & \text{in } D_n,
\end{cases}
\end{equation}
where $k \in \{1, 3, \ldots, n-2\}$.\\
Specifically, for a given $\psi \in H^{1/2}(\partial D_{k+1})$, we define the extension $w_k = \mathcal{R}_k(\psi) \in H^1(D_{k+1})$ as the unique solution to the elliptic boundary value problem
\begin{equation}\label{eq:harmonic-extension}
\begin{cases}
\Delta w_k = 0 & \text{in } D_{k+1},\\[4pt]
w_k =\psi|_{\Gamma_k} & \text{on } \Gamma_k,\\[4pt]
w_k =\psi|_{\Gamma_{k+1}} & \text{on } \Gamma_{k+1}.
\end{cases}
\end{equation}

\begin{remark}[Generalizability of the $\mathbb T$-coercivity Framework]
The paper explicitly constructs the operator $\mathbb{T}$ by means of a localized reflection and harmonic lifting technique, and thereby obtains computable contrast conditions. The essence of the method, however, does not lie in the spherical symmetry itself, but rather in the use of localized reflection, trace theory, and bounded lifting operators to remedy the lack of coercivity caused by sign-changing coefficients. Therefore, for more general Lipschitz interfaces, if adjacent interfaces remain suitably separated and the local geometry allows the corresponding reflection maps and lifting estimates to be established, then a similar $\mathbb{T}$ operator can be constructed under appropriate conditions. In this case, the relevant constants generally no longer admit explicit expressions in terms of radius ratios, but instead depend on the geometry of the interfaces, the local coordinate transformations, and the norms of the trace and lifting operators.
\end{remark}

Based on the construction of the coercive operator $\mathbb{T}$ and in combination with the Fredholm alternative theorem, we establish the existence and uniqueness of solutions to the boundary value problem
\eqref{eq10}. The precise result is as follows.

\begin{theorem}\label{thm:well-posedness}
Assume that, for each \(k \in \{1,3,\ldots,n-2\}\), the permittivities satisfy
\begin{equation}\label{eq23}
\begin{aligned}
\dfrac{\varepsilon_{k+1}^2+(\sigma/\omega)^2}{\varepsilon_k|\varepsilon_{k+1}|}
&> (C_1^{(k)} C^{(k)})^2, \\
\dfrac{\varepsilon_{k+1}^2+(\sigma/\omega)^2}{\varepsilon_{k+2}|\varepsilon_{k+1}|}
&> (C_1^{(k)} C^{(k+2)})^2,
\end{aligned}
\end{equation}
where \(C_1^{(k)} \coloneqq \|\mathcal{R}_k\|\), and \(C^{(k)}\) and \(C^{(k+2)}\) are constants depending only on the dimension \(d\) and the geometries of \(D_k\) and \(D_{k+2}\), respectively. Then there exists a discrete set \(I \subset (0,\infty)\) such that, for every \(\omega \in (0,\infty)\setminus I\), the boundary value problem \eqref{eq10} admits a unique solution \(u_\sigma \in H^1(\mathbb{R}^d)\).
\end{theorem}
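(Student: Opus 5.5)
We reduce \eqref{eq10} to the truncated problem \eqref{eq37} on $B_R$ with $R>\max\{R_{n-1},R_0\}$, which is legitimate by Lemma~\ref{lemm4} and the DtN reformulation. Its weak form is: find $u\in H^1(B_R)$ with
\[
a_\sigma(u,v):=\int_{B_R}\varepsilon_\sigma^{-1}\nabla u\cdot\nabla\overline v\,\mathrm dx-\varepsilon_n^{-1}\bigl\langle \mathrm{DtN}(-\mathrm i\kappa_\sigma)u,v\bigr\rangle_{\partial B_R}-\int_{B_R}(\omega^2\mu+\mathrm i\sigma)u\overline v\,\mathrm dx=-\int_{B_R}f\overline v\,\mathrm dx
\]
for all $v\in H^1(B_R)$. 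We split $a_\sigma=b_\sigma+c_\sigma$, where $b_\sigma$ is the gradient term plus the DtN term plus an $L^2$ term $\int u\overline v$ added for positivity. The remainder $c_\sigma$ is an $L^2$ pairing, so it induces a compact operator on $H^1(B_R)$ by Rellich--Kondrachov. The key step is to show that $b_\sigma$ is $\mathbb T$-coercive, i.e. $|b_\sigma(u,\mathbb Tu)|\ge\alpha\|u\|_{H^1(B_R)}^2$. Here $\mathbb T$ is \eqref{eq:coercive-operator} restricted to $B_R$, acting as the identity on $B_R\cap D_n$. We check that $\mathbb T$ maps $H^1(B_R)$ into itself: on $D_{k+1}$ the trace of $-u_{k+1}+2\mathcal R_k\gamma_{k+1}u_{k+1}$ equals $\gamma_{k+1}u_{k+1}$, which matches the traces of $u_k$ and $u_{k+2}$. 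We also have $\mathbb T\circ\mathbb T=I$, so $\mathbb T$ is an isomorphism.

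For the coercivity we take the real part of $b_\sigma(u,\mathbb Tu)$. On odd layers $D_k$ we get $\varepsilon_k^{-1}\|\nabla u\|^2_{L^2(D_k)}>0$, and the DtN term is nonnegative by Lemma~\ref{lemm4} since $\varepsilon_n>0$. On even layers $D_{k+1}$ we have $\Re(\varepsilon_{k+1}+\mathrm i\sigma/\omega)^{-1}=\varepsilon_{k+1}/(\varepsilon_{k+1}^2+(\sigma/\omega)^2)<0$, so the $-u_{k+1}$ part of $\mathbb Tu$ yields the positive term $\frac{|\varepsilon_{k+1}|}{\varepsilon_{k+1}^2+(\sigma/\omega)^2}\|\nabla u\|^2_{L^2(D_{k+1})}$. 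The cross term $2\int_{D_{k+1}}\Re(\varepsilon_{\sigma}^{-1})\nabla u\cdot\nabla\overline{\mathcal R_k\gamma u}$ is bounded by $2\frac{|\varepsilon_{k+1}|}{\varepsilon_{k+1}^2+(\sigma/\omega)^2}\|\nabla u\|_{D_{k+1}}C_1^{(k)}\|\gamma_{k+1}u\|_{H^{1/2}(\partial D_{k+1})}$. The trace on $\Gamma_k$ is controlled from the $D_k$ side by $C^{(k)}\|u\|_{H^1(D_k)}$, and the trace on $\Gamma_{k+1}$ from the $D_{k+2}$ side by $C^{(k+2)}\|u\|_{H^1(D_{k+2})}$. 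For the last shell $D_{n-1}$, we use $D_n\cap B_R$ as the $(k+2)$ side.

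We then apply Young's inequality with weights $\eta$ and $1/\eta$. The gradient term on $D_{k+1}$ must be absorbed by its own positive term, and the trace terms by $\varepsilon_k^{-1}\|\nabla u\|^2_{D_k}$ and $\varepsilon_{k+2}^{-1}\|\nabla u\|^2_{D_{k+2}}$, after shifting the $L^2$ parts of the $H^1$ norms into the added $\int|u|^2$ term. Optimizing $\eta$ shows that absorption is possible exactly when
\[
\Bigl(\tfrac{|\varepsilon_{k+1}|}{\varepsilon_{k+1}^2+(\sigma/\omega)^2}\Bigr)^2(C_1^{(k)}C^{(k)})^2<\tfrac{|\varepsilon_{k+1}|}{\varepsilon_{k+1}^2+(\sigma/\omega)^2}\,\varepsilon_k^{-1}
\]
and the analogous inequality with $k+2$ hold. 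These are precisely \eqref{eq23}. Getting the bookkeeping right is the main obstacle: the constants must be chosen so that the odd-layer energy is not consumed twice, once by the shell $D_{k-1}$ and once by $D_{k+1}$, and a factor $1/2$ splitting of each odd layer may be needed. It is also delicate to verify that the $L^2$ remainders fit into the compact part.

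Given $\mathbb T$-coercivity, Lax--Milgram shows that the operator $B_\sigma$ associated with $b_\sigma$ is an isomorphism, so $A_\sigma=B_\sigma+K$ is Fredholm of index zero. The dependence on $\omega$ is analytic, entering through $\mu\omega^2$, $\varepsilon_\sigma$ and $\kappa_\sigma$, where the DtN map is analytic in $s$. Here we fix the structure so that \eqref{eq23} holds on the range considered. The analytic Fredholm theorem then applies, provided $A_\sigma$ is invertible for at least one $\omega$. Invertibility at one $\omega$ follows from uniqueness, which we obtain from the imaginary-part identity in the proof of Proposition~\ref{prop:uniform-estimate}: with $f=0$ it forces $\sigma\|u\|^2_{L^2}=0$, hence $u=0$. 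Since $\sigma>0$, this uniqueness in fact holds for every $\omega$. Consequently $A_\sigma$ fails to be invertible only on a discrete set $I$, and we can state the result in that form. Off $I$, the Fredholm alternative gives existence, and the DtN equivalence extends the solution to a radiating $u_\sigma\in H^1(\mathbb R^d)$.
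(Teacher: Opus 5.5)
\emph{Verdict.} Your route matches the paper's: DtN truncation on $B_R$, the same reflection-plus-harmonic-lifting $\mathbb T$, the constants $C_1^{(k)},C^{(k)}$, and the Fredholm alternative. On the functional-analytic side it is more complete than the paper. You check $\mathbb T^2=I$. Your imaginary-part uniqueness argument is correct for every $\omega>0$: all layers dissipate, and $\operatorname{Im}\langle\mathrm{DtN}(-\mathrm{i}\kappa_\sigma)u,u\rangle_{\partial B_R}=\varepsilon_n\sigma\|\tilde u\|^2_{L^2(\mathbb R^d\setminus B_R)}\ge0$ for the decaying exterior extension $\tilde u$. So Fredholm index zero already gives invertibility at every $\omega$ satisfying \eqref{eq23}; one may take $I=\emptyset$, and the analytic Fredholm step is superfluous.

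\textbf{The gap.} It lies in the coercivity step, exactly where you hedge. For $n\ge5$, each interior odd layer $D_j$ ($3\le j\le n-2$) must pay for two traces: that of the shell $D_{j-1}$ on $\Gamma_{j-1}$, and that of the shell $D_{j+1}$ on $\Gamma_j$. Your concern is justified; the paper's display after \eqref{eq71} in fact spends $\varepsilon_j^{-1}\int_{D_j}|\nabla u_\sigma|^2\,\mathrm{d}x$ twice. But splitting that energy in half proves the result only under a strictly stronger hypothesis (a factor $2$ on the right-hand sides of \eqref{eq23} at the interior odd layers), not the theorem as stated.

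\textbf{The missing idea.} Regroup by odd layers, and take $C^{(j)}$ to be the constant of the trace $H^1(D_j)\to H^{1/2}(\partial D_j)$ onto the whole boundary, normed by $\|\cdot\|^2_{H^{1/2}(\Gamma_{j-1})}+\|\cdot\|^2_{H^{1/2}(\Gamma_j)}$ as in \eqref{eq68} and Appendix~B. Write $\alpha_i:=|\varepsilon_i|/(\varepsilon_i^2+(\sigma/\omega)^2)$. The demand on $D_j$ is $a_j\|u\|^2_{H^{1/2}(\Gamma_{j-1})}+b_j\|u\|^2_{H^{1/2}(\Gamma_j)}$, with $a_j=\alpha_{j-1}(C_1^{(j-2)})^2$ and $b_j=\alpha_{j+1}(C_1^{(j)})^2$ (times $\eta^{-1}$ in your Young step). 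This is at most $\max(a_j,b_j)(C^{(j)})^2\|u\|^2_{H^1(D_j)}$. The condition $\max(a_j,b_j)(C^{(j)})^2<\varepsilon_j^{-1}$ is exactly the second line of \eqref{eq23} at $k=j-2$ together with the first line at $k=j$. Since \eqref{eq23} is strict and there are finitely many layers, one $\eta<1$ serves all of them.

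\textbf{Two smaller repairs.} First, bounding the cross term with $|\operatorname{Re}\varepsilon_\sigma^{-1}|=\alpha_{k+1}$ rather than $|\varepsilon_\sigma^{-1}|$ is legitimate only because $\psi=\mathcal R_k(\gamma_{k+1}u)$ is harmonic and $u-\psi\in H^1_0(D_{k+1})$. Then $\int_{D_{k+1}}\nabla u\cdot\nabla\overline{\psi}\,\mathrm{d}x=\|\nabla\psi\|^2_{L^2(D_{k+1})}$ is real. This is the paper's splitting $u_\sigma=\psi_{k+1}+z_{k+1}$. Without it you only reach the stronger, $\sigma$-free condition $|\varepsilon_{k+1}|/\varepsilon_k>(C_1^{(k)}C^{(k)})^2$.

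Second, the $L^2$ term added to $b_\sigma$ should be $\lambda(u,\mathbb Tv)_{L^2(B_R)}$, not $(u,v)_{L^2(B_R)}$. Tested with $v=\mathbb Tu$, the latter contributes $-\|u\|^2_{L^2(D_{k+1})}+2\operatorname{Re}(u,\psi)_{L^2(D_{k+1})}$ on even layers, which has no sign. The former gives $\lambda\|u\|^2_{L^2(B_R)}$, since $\mathbb T^2=I$. For $\lambda$ large it absorbs the $L^2$ parts of the $H^1(D_j)$ norms. It is still a compact perturbation, because $|(u,\mathbb Tv)_{L^2(B_R)}|\le C\|u\|_{L^2(B_R)}\|v\|_{H^1(B_R)}$.
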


\begin{proof}

Multiplying the governing equation by $\overline{u_\sigma}$ and integrating over the ball $B_R$, we obtain
\begin{equation*}\label{eq:well-posedness-system}
    \int_{B_{R}} \nabla \cdot (\varepsilon_\sigma^{-1} \nabla u_\sigma) \overline{u_\sigma} \, {\rm d}x
    + \int_{B_{R}} \omega^2 \mu |u_\sigma|^2 \, {\rm d}x
    + \int_{B_{R}} {\rm i} \sigma |u_\sigma|^2 \, {\rm d}x
    = \int_{B_{R}} f \overline{u_\sigma} \, {\rm d}x.
\end{equation*}
Applying Green's first identity yields
\begin{equation*}
    \int_{\partial B_{R}} \varepsilon_n^{-1}\nabla u_\sigma\cdot\vec{n} \overline{u_\sigma} \, {\rm d}s
    - \int_{B_{R}} \varepsilon_\sigma^{-1} |\nabla u_\sigma|^2 \, {\rm d}x
    + \int_{B_{R}} \omega^2 \mu |u_\sigma|^2 \, {\rm d}x
    + \int_{B_{R}} {\rm i} \sigma |u_\sigma|^2 \, {\rm d}x
    = \int_{B_{R}} f \overline{u_\sigma} \, {\rm d}x.
\end{equation*}
The corresponding variational problem is to find $u \in H^1(B_R)$ such that 
\begin{equation}\label{eq:well-posedness-system}
    a(u,v) = - \int_{B_{R}} f \overline{v} \, {\rm d}x \quad \text{for all } v \in H^1(B_R),
\end{equation}
where the sesquilinear form decomposes as $a(u,v) \coloneqq b(u,v) + c(u,v)$, with
\begin{align*}
    b(u,v) &\coloneqq \int_{B_{R}} \varepsilon_\sigma^{-1} \nabla u \cdot \nabla \overline{v} \, {\rm d}x
    - \int_{\partial B_{R}} \varepsilon_n^{-1} (\operatorname{DtN}(-{\rm i}\kappa_\sigma) u) \overline{v} \, {\rm d}s, \\
    c(u,v) &\coloneqq - \int_{B_{R}} \omega^2 \mu u \overline{v} \, {\rm d}x
    - \int_{B_{R}} {\rm i} \sigma u \overline{v} \, {\rm d}x.
\end{align*}
Now, using the definition of \(b(\cdot,\cdot)\) and the operator \(\mathbb{T}\), we have
\begin{align*}
    b(u_\sigma, \mathbb{T}u_\sigma) &=
    \int_{B_{R}} \varepsilon_\sigma^{-1} \nabla u_\sigma \cdot \nabla (\overline{\mathbb{T}u_\sigma}) \, {\rm d}x
    - \int_{\partial B_{R}} \varepsilon_n^{-1} (\operatorname{DtN}(-{\rm i}\kappa_\sigma) u_\sigma) \overline{u_\sigma} \, {\rm d}s \\
    &= \int_{B_{R} \setminus \bigcup_{i=1}^{n-1} D_i} \varepsilon_n^{-1} |\nabla u_\sigma|^2 \, {\rm d}x \\
    &\quad + \sum_{k} \Biggl( \int_{D_k} \varepsilon_k^{-1} |\nabla u_\sigma|^2 \, {\rm d}x \\
    &\qquad + \int_{D_{k+1}} \Bigl(\varepsilon_{k+1} + \frac{{\rm i}\sigma}{\omega}\Bigr)^{-1}
    \nabla u_\sigma \cdot \nabla \Bigl( \overline{-u_\sigma + 2\mathcal{R}_k\bigl(\gamma_{k+1}(u_\sigma|_{D_{k+1}})\bigr)} \Bigr) \, {\rm d}x \Biggr) \\
    &\quad - \int_{\partial B_{R}} \varepsilon_n^{-1} (\operatorname{DtN}(-{\rm i}\kappa_\sigma) u_\sigma) \overline{u_\sigma} \, {\rm d}s.
\end{align*}
Taking the real part and applying Lemma \ref{lemm4}, we obtain
\begin{align}\label{eq12}
\operatorname{Re}b(u_\sigma,\mathbb{T}u_\sigma)
&=\sum_{k}\Biggl(\int_{D_k}\varepsilon_k^{-1}|\nabla u_\sigma|^2\,{\rm d}x\nonumber\\
&\quad+ \operatorname{Re}\int_{D_{k+1}} \Bigl(\varepsilon_{k+1} + \frac{{\rm i}\sigma}{\omega}\Bigr)^{-1}
    \nabla u_\sigma \cdot \nabla \Bigl( \overline{-u_\sigma + 2\mathcal{R}_k\bigl(\gamma_{k+1}(u_\sigma|_{D_{k+1}})\bigr)} \Bigr) \, {\rm d}x \Biggr) \nonumber\\
&\quad+\int_{B_{R}\setminus\bigcup_{i=1}^{n-1}D_i}\varepsilon_n^{-1}|\nabla u_\sigma|^2\,{\rm d}x
-\operatorname{Re}\int_{\partial B_{R}}\varepsilon_n^{-1}(\operatorname{DtN}(-{\rm i}\kappa_\sigma)u_\sigma)\overline{u_\sigma}\,{\rm d}s \nonumber\\
&\ge\sum_{k}\Biggl(\int_{D_k}\varepsilon_k^{-1}|\nabla u_\sigma|^2\,{\rm d}x \nonumber\\
&\qquad+ \operatorname{Re}\int_{D_{k+1}} \Bigl(\varepsilon_{k+1} + \frac{{\rm i}\sigma}{\omega}\Bigr)^{-1}
    \nabla u_\sigma \cdot \nabla \Bigl( \overline{-u_\sigma + 2\mathcal{R}_k\bigl(\gamma_{k+1}(u_\sigma|_{D_{k+1}})\bigr)} \Bigr) \, {\rm d}x \Biggr) \nonumber\\
&\quad+\int_{B_{R}\setminus\bigcup_{i=1}^{n-1}D_i}\varepsilon_n^{-1}|\nabla u_\sigma|^2\,{\rm d}x
+\varepsilon_n^{-1}\frac{d-2}{2R}\|u_\sigma\|_{L^2(\partial B_R)}^2.
\end{align}

For each $k$, we define
\begin{equation*}
    \psi_{k+1} \coloneqq \mathcal{R}_k(\gamma_{k+1}(u_\sigma|_{D_{k+1}})).
\end{equation*}
By the definition of the lifting operator \(\mathcal{R}_k\), the function
\(\psi_{k+1}\) is the unique solution of the boundary value problem
\eqref{eq:harmonic-extension}. We then decompose \(u_\sigma\) in
\(D_{k+1}\) as
\[
    u_\sigma = \psi_{k+1}+z_{k+1},
    \qquad z_{k+1}\in H^1_0(D_{k+1}).
\]
Since \(\psi_{k+1}\) is harmonic in \(D_{k+1}\) and
\(z_{k+1}\in H^1_0(D_{k+1})\), the weak formulation gives
\begin{align*}
\int_{D_{k+1}}\nabla\psi_{k+1}\cdot\overline{\nabla z_{k+1}}{\rm d}x=0.
\end{align*}
Consequently,
\begin{equation}\label{eq66}
\begin{aligned}
&\operatorname{Re} \int_{D_{k+1}} \Bigl( \varepsilon_{k+1} + \frac{{\rm i}\sigma}{\omega} \Bigr)^{-1}
    \nabla u_\sigma \cdot \nabla \bigl( \overline{-u_\sigma+2\psi_{k+1}} \bigr) \, {\rm d}x\\
=&\operatorname{Re}\Bigl( \varepsilon_{k+1} + \frac{{\rm i}\sigma}{\omega} \Bigr)^{-1}(\|\nabla\psi_{k+1}\|_{L^2(D_{k+1})}^2-\|\nabla z_{k+1}\|_{L^2(D_{k+1})}^2)\\
=&\dfrac{\varepsilon_{k+1}}{\varepsilon_{k+1}^2+(\sigma/\omega)^2}(\|\nabla\psi_{k+1}\|_{L^2(D_{k+1})}^2-\|\nabla z_{k+1}\|_{L^2(D_{k+1})}^2).
\end{aligned}
\end{equation}

By the definition of the extension operator, there exists a constant $C_1^{(k)} > 0$ such that
\begin{equation}\label{eq67}
\begin{aligned}
\|\nabla\psi_{k+1}\|^2_{L^2(D_{k+1})}\leq&\|\psi_{k+1}\|^2_{H^1(D_{k+1})}=\|\mathcal{R}_k(\gamma_{k+1}(u_\sigma|_{D_{k+1}}))\|^2_{H^1(D_{k+1})}\\
\leq&(C_1^{(k)})^2\|\gamma_{k+1}(u_\sigma|_{D_{k+1}})\|^2_{H^{1/2}(\partial D_{k+1})}.
\end{aligned}
\end{equation}
Since $\partial D_{k+1}=\Gamma_k\cup\Gamma_{k+1}$ and $\Gamma_k\cap\Gamma_{k+1}=\emptyset$, we have
\begin{align}\label{eq68}
\|\gamma_{k+1}(u_\sigma|_{D_{k+1}})\|^2_{H^{1/2}(\partial D_{k+1})}\leq\|(u_\sigma|_{D_{k+1}})|_{\Gamma_k}\|^2_{H^{1/2}(\Gamma_k)}+\|(u_\sigma|_{D_{k+1}})|_{\Gamma_{k+1}}\|^2_{H^{1/2}(\Gamma_{k+1})}.
\end{align}
Since
\begin{align*}
(u_\sigma|_{D_{k+1}})|_{\Gamma_{k+1}}=(u_\sigma|_{D_{k+2}})|_{\Gamma_{k+1}},\quad (u_\sigma|_{D_{k}})|_{\Gamma_{k}}=(u_\sigma|_{D_{k+1}})|_{\Gamma_{k}},
\end{align*}
we have
\begin{equation}\label{eq69}
\begin{aligned}
\|(u_\sigma|_{D_{k+1}})|_{\Gamma_k}\|_{H^{1/2}(\Gamma_k)}+\|(u_\sigma|_{D_{k+1}})|_{\Gamma_{k+1}}\|_{H^{1/2}(\Gamma_{k+1})}=&\|(u_\sigma|_{D_{k}})|_{\Gamma_k}\|_{H^{1/2}(\Gamma_k)}\\
&+\|(u_\sigma|_{D_{k+2}})|_{\Gamma_{k+1}}\|_{H^{1/2}(\Gamma_{k+1})}.
\end{aligned}
\end{equation}
By the trace theorem, there exist constants $C^{(k)}, C^{(k+2)} > 0$ such that
\begin{align}\label{eq70}
\|(u_\sigma|_{D_{k}})|_{\Gamma_k}\|^2_{H^{1/2}(\Gamma_k)}+\|(u_\sigma|_{D_{k+2}})|_{\Gamma_{k+1}}\|^2_{H^{1/2}(\Gamma_{k+1})}\leq (C^{(k)})^2\|u_\sigma\|^2_{H^1(D_{k})}+(C^{(k+2)})^2\|u_\sigma\|^2_{H^1(D_{k+2})}.
\end{align}
According to (\ref{eq66}-\ref{eq70}), we have
\begin{equation}\label{eq71}
\begin{aligned}
&\operatorname{Re} \int_{D_{k+1}} \Bigl( \varepsilon_{k+1} + \frac{{\rm i}\sigma}{\omega} \Bigr)^{-1}
    \nabla u_\sigma \cdot \nabla \bigl( \overline{-u_\sigma+2\psi_{k+1}} \bigr) \, {\rm d}x\\
\geq&\dfrac{\varepsilon_{k+1}}{\varepsilon_{k+1}^2+(\sigma/\omega)^2}((C_1^{(k)})^2((C^{(k)})^2\|u_\sigma\|^2_{H^1(D_{k})}+(C^{(k+2)})^2\|u_\sigma\|^2_{H^1(D_{k+2})})-\|\nabla z_{k+1}\|_{L^2(D_{k+1})}^2)
\end{aligned}
\end{equation}
According to \eqref{eq12} and \eqref{eq71}, we have
\begin{align*}
\operatorname{Re}b(u_\sigma,\mathbb{T}u_\sigma)
&\ge\sum_{k}\Biggl(\int_{D_k}\varepsilon_k^{-1}|\nabla u_\sigma|^2\,{\rm d}x\\
&\qquad+ \operatorname{Re}\int_{D_{k+1}} \Bigl(\varepsilon_{k+1} + \frac{{\rm i}\sigma}{\omega}\Bigr)^{-1}
    \nabla u_\sigma \cdot \nabla \Bigl( \overline{-u_\sigma + 2\mathcal{R}_k\bigl(\gamma_{k+1}(u_\sigma|_{D_{k+1}})\bigr)} \Bigr) \, {\rm d}x \Biggr)\\
&\quad+\int_{B_{R}\setminus\bigcup_{i=1}^{n-1}D_i}\varepsilon_n^{-1}|\nabla u_\sigma|^2\,{\rm d}x
+\varepsilon_n^{-1}\frac{d-2}{2R}\|u_\sigma\|_{L^2(\partial B_R)}^2\\
\geq&\sum_{k}\Biggl(\int_{D_k}\varepsilon_k^{-1}|\nabla u_\sigma|^2\,{\rm d}x\\
&\quad+ \dfrac{\varepsilon_{k+1}}{\varepsilon_{k+1}^2+(\sigma/\omega)^2}((C_1^{(k)})^2((C^{(k)})^2\|u_\sigma\|^2_{H^1(D_{k})}+(C^{(k+2)})^2\|u_\sigma\|^2_{H^1(D_{k+2})})\\
&\quad-\|\nabla z_{k+1}\|_{L^2(D_{k+1})}^2)+\int_{B_{R}\setminus\bigcup_{i=1}^{n-1}D_i}\varepsilon_n^{-1}|\nabla u_\sigma|^2\,{\rm d}x
+\varepsilon_n^{-1}\frac{d-2}{2R}\|u_\sigma\|_{L^2(\partial B_R)}^2\\
=&\sum_k\Biggl(\int_{D_k}\varepsilon_k^{-1}|\nabla u_\sigma|^2\,{\rm d}x\\
&\quad+ \dfrac{\varepsilon_{k+1}}{\varepsilon_{k+1}^2+(\sigma/\omega)^2}((C_1^{(k)})^2((C^{(k)})^2\|\nabla u_\sigma\|^2_{L^2(D_{k})}+(C^{(k+2)})^2\|\nabla u_\sigma\|^2_{L^2(D_{k+2})})\\
&\quad-\|\nabla z_{k+1}\|_{L^2(D_{k+1})}^2)+\int_{B_{R}\setminus\bigcup_{i=1}^{n-1}D_i}\varepsilon_n^{-1}|\nabla u_\sigma|^2\,{\rm d}x
+\varepsilon_n^{-1}\frac{d-2}{2R}\|u_\sigma\|_{L^2(\partial B_R)}^2\\
&\quad+\sum_k\dfrac{\varepsilon_{k+1}}{\varepsilon_{k+1}^2+(\sigma/\omega)^2}((C_1^{(k)})^2((C^{(k)})^2\|u_\sigma\|^2_{L^2(D_{k})}+(C^{(k+2)})^2\| u_\sigma\|^2_{L^2(D_{k+2})})\\
=&\sum_k\Biggl(\Bigg(\varepsilon_k^{-1}-\dfrac{|\varepsilon_{k+1}|}{\varepsilon_{k+1}^2+(\sigma/\omega)^2}(C_1^{(k)})^2((C^{(k)})^2\Bigg)\int_{D_k}|\nabla u_\sigma|^2\,{\rm d}x\\
&\quad+\Bigg(\varepsilon_{k+2}^{-1}-\dfrac{|\varepsilon_{k+1}|}{\varepsilon_{k+1}^2+(\sigma/\omega)^2}(C_1^{(k)})^2((C^{(k+2)})^2\Bigg)\int_{D_{k+2}}|\nabla u_\sigma|^2\,{\rm d}x\\
&\quad+ \dfrac{|\varepsilon_{k+1}|}{\varepsilon_{k+1}^2+(\sigma/\omega)^2}\|\nabla z_{k+1}\|_{L^2(D_{k+1})}^2+\int_{B_{R}\setminus\bigcup_{i=1}^{n-1}D_i}\varepsilon_n^{-1}|\nabla u_\sigma|^2\,{\rm d}x
+\varepsilon_n^{-1}\frac{d-2}{2R}\|u_\sigma\|_{L^2(\partial B_R)}^2\\
&\quad+\sum_k\dfrac{\varepsilon_{k+1}}{\varepsilon_{k+1}^2+(\sigma/\omega)^2}((C_1^{(k)})^2((C^{(k)})^2\|u_\sigma\|^2_{L^2(D_{k})}+(C^{(k+2)})^2\| u_\sigma\|^2_{L^2(D_{k+2})})
.
\end{align*}
Since for each $k$, we have
\begin{equation}\label{eq14}
\begin{aligned}
\dfrac{\varepsilon_{k+1}^2+(\sigma/\omega)^2}{\varepsilon_k|\varepsilon_{k+1}|} &> (C_1^{(k)}C^{(k)})^2, \\
\dfrac{\varepsilon_{k+1}^2+(\sigma/\omega)^2}{\varepsilon_{k+2}|\varepsilon_{k+1}|} &> (C_1^{(k)}C^{(k+2)})^2.
\end{aligned}
\end{equation}
Thus, by the Fredholm alternative theorem, equation \eqref{eq10} admits a unique solution $u_\sigma \in H^1(\mathbb{R}^d)$ for all frequencies $\omega$ outside a discrete exception set.

\end{proof}

\begin{remark}[On the Sharpness and Physical Significance of the Contrast Condition]
The explicit dependence of \eqref{eq23} on the trace constant suggests that the uniqueness of the radiating solution is not only a property of the materials but a collective effect of the geometry and the contrast. Thus, Theorem \ref{thm:well-posedness} provides a rigorous mathematical criterion: by strategically tuning the layer radii to minimize $C^{(k)}$, one can theoretically broaden the range of admissible sign-changing materials for which a stable, physically meaningful radiating solution exists.
\end{remark}

\section{The main result}\label{sec4}
We now state and prove the main result of this paper.

\begin{theorem}
Under the assumptions of Theorem~\ref{thm:well-posedness} with $\sigma=0$, the limit $u_0 \in H^1_{\mathrm{loc}}(\mathbb{R}^d)$ of the solutions $u_\sigma$ as $\sigma \to 0^+$ satisfies the boundary value problem \eqref{eq11}. Moreover,
\begin{equation*}
    \|u_0\|_{H^1(B_R)} \le C \|f\|_{L^2(\mathbb{R}^d)} \qquad \text{for all } R > 0,
\end{equation*}
where $C > 0$ is a constant independent of  $f$.
\end{theorem}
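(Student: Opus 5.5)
The plan is the standard limiting-absorption argument: prove a bound on $\|u_\sigma\|_{H^1(B_R)}$ that is uniform in $\sigma$, extract a weak limit, and identify that limit as the radiating solution of \eqref{eq11}. I work with the truncated formulation \eqref{eq37} on a fixed ball $B_R$ with $R>\max\{R_{n-1},R_0\}$. The DtN operator $\mathrm{DtN}(-\mathrm{i}\kappa_\sigma)$ depends continuously on $\kappa_\sigma$, and $\kappa_\sigma\to\kappa_0=\omega\sqrt{\varepsilon_n\mu_n}$ as $\sigma\to0^+$. So the forms $a_\sigma=b_\sigma+c_\sigma$ of Theorem~\ref{thm:well-posedness} converge in operator norm on $H^1(B_R)$ to the $\sigma=0$ form $a_0$.

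\emph{Step 1: uniform $\mathbb T$-coercivity.} Under \eqref{eq23} with $\sigma=0$ the inequalities are strict, and they stay strict, with a uniform margin, for all small $\sigma$ (the left-hand sides only increase in $\sigma$). The computation in the proof of Theorem~\ref{thm:well-posedness} then gives
\[
\operatorname{Re} b_\sigma(u,\mathbb T u)\ge \alpha\|\nabla u\|^2_{L^2(B_R)}-\beta\|u\|^2_{L^2(B_R)}
\]
with $\alpha,\beta$ independent of $\sigma$. Hence $a_\sigma(\cdot,\mathbb T\cdot)$ is coercive plus compact (Rellich on $B_R$), uniformly in $\sigma$.

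\emph{Step 2: uniform bound by contradiction.} Suppose there are $\sigma_m\to0$ and $f_m$ with $\|u_{\sigma_m}\|_{H^1(B_R)}=1$ and $\|f_m\|_{L^2}\to0$. Write $u_m:=u_{\sigma_m}$. Lemmas~\ref{lemm1} and \ref{lemma2} give the $H^1$ control layer by layer, so a subsequence satisfies $u_m\rightharpoonup u$ weakly in $H^1(B_R)$ and strongly in $L^2(B_R)$. Passing to the limit in the variational formulation (by the operator-norm convergence of the forms) shows that $u$ solves the homogeneous problem with $\sigma=0$.

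\emph{Step 3: uniqueness at $\sigma=0$.} This is where $\omega\notin I$ enters. I take the imaginary part of $a_0(u,u)=0$ and use the DtN sign structure of Lemma~\ref{lemm4} at real wavenumber. This gives $\operatorname{Im}\langle \mathrm{DtN}\,u,u\rangle=0$, so the far field vanishes. Rellich's lemma then gives $u=0$ outside $B_{R_{n-1}}$. Unique continuation across the interfaces, together with the Fredholm/discreteness argument of Theorem~\ref{thm:well-posedness}, gives $u\equiv0$.

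Next I upgrade to strong convergence. By Step 1,
\[
\alpha\|\nabla u_m\|^2\le \operatorname{Re} a_m(u_m,\mathbb T u_m)+\beta'\|u_m\|^2_{L^2}.
\]
Here $a_m(u_m,\mathbb T u_m)=-(f_m,\mathbb T u_m)\to0$ and $\|u_m\|_{L^2}\to\|u\|_{L^2}=0$. This forces $\|u_m\|_{H^1}\to0$, contradicting the normalization. Hence $\|u_\sigma\|_{H^1(B_R)}\le C\|f\|_{L^2}$ uniformly in $\sigma$.

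\emph{Step 4: limit and conclusion.} With the uniform bound, a diagonal subsequence over $R\to\infty$ converges weakly in $H^1_{\mathrm{loc}}$ to some $u_0$. This $u_0$ solves the equation and the transmission conditions \eqref{eq:trans_cond} with $\sigma=0$, and it satisfies the DtN condition with $\kappa_0$ on every $\partial B_R$. That DtN condition is equivalent to the Sommerfeld radiation condition, so $u_0$ solves \eqref{eq11}. Uniqueness from Step 3 shows the whole family converges, not just a subsequence. The estimate passes to the limit by weak lower semicontinuity.

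\textbf{Main obstacle.} The hardest step is Step 3 together with keeping the constants uniform. One must check that the trace and lifting constants $C^{(k)},C_1^{(k)}$ do not depend on $\sigma$, which is clear since they are purely geometric. One must also handle the compact perturbation coming from the DtN term, which is not sign-definite at real wavenumber. There I would split $\mathrm{DtN}$ into its Laplace-type coercive part plus a compact remainder, in the standard way.
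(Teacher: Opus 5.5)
Your proposal is correct, but its key step differs from the paper's. The paper's limiting argument does not use a G{\aa}rding inequality. It combines the layerwise estimates of Lemmas~\ref{lemm1} and~\ref{lemma2} with a cutoff bound on the interface fluxes, which reduces everything to controlling $\sum_i\|\nabla u_\sigma\|_{L^2(N_i)}+\|u_\sigma\|_{L^2(B_R)}$. It proves that bound by contradiction. Proposition~\ref{prop:uniform-estimate} forces $\sigma_m\to0$, Rellich's lemma handles the exterior, and coercivity plus Fredholm is invoked only for the interior Cauchy problem \eqref{eq31}. You instead stay on the DtN-truncated ball and read the computation of Theorem~\ref{thm:well-posedness} as a G{\aa}rding inequality with constants uniform in $\sigma$. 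You pass to the limit using norm continuity of $\mathrm{DtN}(-\mathrm{i}\kappa_\sigma)$ in $\sigma$, then test with $\mathbb{T}u_m$ to upgrade weak to strong convergence.

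That upgrade is exactly what the paper's argument lacks. Knowing $\widetilde{u_{\sigma_m}}\rightharpoonup0$ in $H^1_{\mathrm{loc}}$ and $\widetilde{u_{\sigma_m}}\to0$ in $L^2_{\mathrm{loc}}$ does not contradict the normalization \eqref{eq22}, because $\sum_i\|\nabla\widetilde{u_{\sigma_m}}\|_{L^2(N_i)}$ is only weakly lower semicontinuous. Your route buys two things:
\begin{enumerate}
\item The contrast condition is used where it is genuinely needed, namely to keep compactness at the sign-changing interfaces.
\item The radiation condition of the limit follows cleanly from the DtN condition on $\partial B_R$, whereas the paper only asserts it.
\end{enumerate}
Conversely, uniqueness at $\sigma=0$ needs neither the contrast condition nor $\omega\notin I$. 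Once $u=0$ in $D_n$, the Cauchy data on $\Gamma_{n-1}$ vanish from both sides. Unique continuation in each connected, constant-coefficient layer then propagates $u=0$ inward. So your remark that this is where $\omega\notin I$ enters is unnecessary, and so is the paper's Fredholm appeal at that point.

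A few small points to fix:
\begin{enumerate}
\item Lemma~\ref{lemm4} holds on the closed half-plane, so it applies at $s=-\mathrm{i}\kappa_0$. The real part of the DtN term is therefore sign-definite at $\sigma=0$, and no splitting into a coercive part plus a compact remainder is needed.
\item The vanishing of the exterior field does not come from Lemma~\ref{lemm4}, which concerns only the real part. It comes from $\operatorname{Im}\langle\mathrm{DtN}(-\mathrm{i}\kappa_0)g,g\rangle>0$ for $g\neq0$: by the Wronskian of $J_\zeta,Y_\zeta$, every spectral multiplier has positive imaginary part. This gives $u=\partial_r u=0$ on $\partial B_R$.
\item In Step~2 you must justify $\sigma_m\to0$. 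This follows from $1\le\|u_m\|_{H^1(\mathbb{R}^d)}\le C\|f_m\|_{L^2}/\sigma_m$ and Proposition~\ref{prop:uniform-estimate}.
\item $\mathbb{T}$ is not bounded on $L^2$, since the lifting uses the $H^{1/2}$ trace. The lower-order term in your upgrade is therefore $\beta\|u_m\|_{L^2}^2+C\|u_m\|_{L^2}\|u_m\|_{H^1}$, which still tends to zero under your normalization.
\item The final constant necessarily depends on $R$.
\end{enumerate}
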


\begin{proof}
By Lemmas~\ref{lemm1} and~\ref{lemma2}, we have
\begin{align}\label{eq16}
    \|u_\sigma\|_{H^1(B_R)} 
    &\le C \Biggl( \sum_{i=1}^{n-1} \bigl( \|\nabla u_\sigma\cdot\vec{n}\|_{H^{-1/2}(\partial D_i)} + \|u_\sigma\|_{L^2(D_i)} \bigr) \notag \\
    &\qquad \qquad + \|\nabla u_\sigma\cdot\vec{n}\|_{H^{-1/2}(\Gamma_{n-1})} + \|u_\sigma\|_{L^2(B_R \setminus \bigcup_{i=1}^{n-1} D_i)} + \|f\|_{L^2(\mathbb{R}^d)} \Biggr) \notag \\
    &\le C \Biggl( \sum_{i=1}^{n-1} \bigl( \|\nabla u_\sigma\cdot\vec{n}\|_{H^{-1/2}(\Gamma_i)} + \|u_\sigma\|_{L^2(D_i)} \bigr) \notag \\
    &\qquad \qquad + \|u_\sigma\|_{L^2(B_R \setminus \bigcup_{i=1}^{n-1} D_i)} + \|f\|_{L^2(\mathbb{R}^d)} \Biggr).
\end{align}
From the definition of the dual norm, we obtain for each $i = 1, \dots, n-1$,
\begin{equation}\label{eq17}
    \|\nabla u_\sigma\cdot\vec{n}\|_{H^{-1/2}(\Gamma_i)} = \sup_{\substack{\phi_i \in H^{1/2}(\Gamma_i) \\ \|\phi_i\|_{H^{1/2}(\Gamma_i)} = 1}} \bigl| \langle \nabla u_\sigma\cdot\vec{n}, \phi_i \rangle_{\Gamma_i} \bigr|.
\end{equation}

Let $\widetilde{N}_i$ be a neighborhood of $\Gamma_i$, and set $N_i \coloneqq \widetilde{N}_i \cap D_i$. By the continuity of the extension operator, there exists a function $v_i \in H^1(N_i)$ satisfying
\begin{equation*}
    \begin{cases}
        v_i|_{\Gamma_i} = \phi_i, \\
        v_i|_{\partial N_i \setminus \Gamma_i} = 0.
    \end{cases}
\end{equation*}
By introducing a smooth cutoff function if necessary, we may ensure these conditions hold. Moreover, we have the estimate
\begin{equation*}
    \|v_i\|_{H^1(N_i)} \le M_i \|\phi_i\|_{H^{1/2}(\Gamma_i)}
\end{equation*}
for some constant $M_i > 0$.

Combining the definition of $v_i$ with integration by parts yields
\begin{align}\label{eq18}
\int_{\Gamma_i}\nabla u_\sigma\cdot\vec{n}\overline{\phi_i}{\rm d}s=&\int_{N_i}\nabla\cdot(\nabla u_\sigma)\overline{v_i}{\rm d}x+\int_{N_i}\nabla u_\sigma\cdot\nabla\overline{v_i}{\rm d}x \nonumber\\
\leq&\|\nabla\cdot(\nabla u_\sigma)\|_{H^{-1}(N_i)}\|v_i\|_{H^1(N_i)}+\|\nabla u_\sigma\|_{L^2(N_i)}\|\nabla v_i\|_{L^2(N_i)} \nonumber\\
\leq& M_i\|\nabla\cdot(\nabla u_\sigma)\|_{H^{-1}(N_i)}\|\phi_i\|_{H^{1/2}(\Gamma_i)}+\|\nabla u_\sigma\|_{L^2(N_i)}\|v_i\|_{H^1(N_i)} \nonumber\\
\leq& M_i\|\nabla\cdot(\nabla u_\sigma)\|_{H^{-1}(N_i)}+M_i\|\nabla u_\sigma\|_{L^2(N_i)}\|\phi_i\|_{H^{1/2}(\Gamma_i)} \nonumber\\
\leq&M_i\left(\|\nabla\cdot(\nabla u_\sigma)\|_{H^{-1}(N_i)}+\|\nabla u_\sigma\|_{L^2(N_i)}\right).
\end{align}
For each $i$, $u_\sigma$ satisfies the equation
\begin{equation*}
    \nabla \cdot (\varepsilon_{\sigma,i}^{-1} \nabla u_\sigma) + \omega^2 \mu_i u_\sigma + {\rm i} \sigma u_\sigma = f \quad \text{in } D_i.
\end{equation*}
Because $\varepsilon_{\sigma,i}$ is constant on $D_i$, rearranging the terms gives
\begin{equation}\label{eq35}
\nabla\cdot(\nabla u_\sigma) = \varepsilon_{\sigma,i} \bigl( f - \omega^2 \mu_i u_\sigma - {\rm i} \sigma u_\sigma \bigr).
\end{equation}	
By the definition of the dual norm,
\begin{equation}\label{eq19}
\|\nabla\cdot(\nabla u_\sigma)\|_{H^{-1}(N_i)}=\sup_{\substack{\widetilde{v_i}\in H^1(N_i) \\ \|\widetilde{v_i}\|_{H^1(N_i)}=1}}\bigl|\langle\nabla\cdot(\nabla u_\sigma),\widetilde{v_i}\rangle_{N_i}\bigr|.
\end{equation}
Using \eqref{eq35}, we obtain
\begin{align*}
\int_{N_i}\nabla\cdot(\nabla u_\sigma)\overline{\widetilde{v_i}}{\rm d}x&=\int_{N_i}\varepsilon_{\sigma,i}(f-\omega^2\mu_i u_\sigma-{\rm i}\sigma u_\sigma)\overline{\widetilde{v_i}}{\rm d}x\\
&\leq \widetilde{M_i}\left(\|f\|_{L^2(N_i)}\|\widetilde{v_i}\|_{L^2(N_i)}+\|u_\sigma\|_{L^2(N_i)}\|\widetilde{v_i}\|_{L^2(N_i)}\right)\\
&\leq \widetilde{M_i}\left(\|f\|_{L^2(N_i)}\|\widetilde{v_i}\|_{H^1(N_i)}+\|u_\sigma\|_{L^2(N_i)}\|\widetilde{v_i}\|_{H^1(N_i)}\right)\\
&= \widetilde{M_i}\left(\|f\|_{L^2(N_i)}+\|u_\sigma\|_{L^2(N_i)}\right),
\end{align*}
where \(\widetilde{M_i}>0\) is a constant.
 Hence,
\begin{align}\label{eq20}
\|\nabla u_\sigma\cdot\vec{n}\|_{H^{-1/2}(\Gamma_i)}
&\le M_i\bigl(\|\nabla\cdot(\nabla u_\sigma)\|_{H^{-1}(N_i)}+\|\nabla u_\sigma\|_{L^2(N_i)}\bigr) \nonumber \\
&\le \breve{M}_i\bigl(\|f\|_{L^2(N_i)}+\|u_\sigma\|_{L^2(N_i)}+\|\nabla u_\sigma\|_{L^2(N_i)}\bigr).
\end{align}
Combining \eqref{eq16}, \eqref{eq20}, and the trace theorem, we deduce that
\begin{align}\label{eq24}
    \|u_\sigma\|_{H^1(B_R)} 
    &\le C \Biggl( \sum_{i=1}^{n-1} \bigl( \|\nabla u_\sigma\cdot\vec{n}\|_{H^{-1/2}(\Gamma_i)} + \|u_\sigma\|_{L^2(D_i)} \bigr) + \|u_\sigma\|_{L^2(B_R \setminus \bigcup_{i=1}^{n-1} D_i)} + \|f\|_{L^2(\mathbb{R}^d)} \Biggr) \notag \\
    &\le C \Biggl( \sum_{i=1}^{n-1} \bigl( \|u_\sigma\|_{L^2(N_i)} + \|\nabla u_\sigma\|_{L^2(N_i)} + \|u_\sigma\|_{L^2(D_i)} \bigr) + \|u_\sigma\|_{L^2(B_R \setminus \bigcup_{i=1}^{n-1} D_i)} + \|f\|_{L^2(\mathbb{R}^d)} \Biggr) \notag \\
    &\le C \Biggl( \sum_{i=1}^{n-1} \|\nabla u_\sigma\|_{L^2(N_i)} + \|u_\sigma\|_{L^2(B_R)} + \|f\|_{L^2(\mathbb{R}^d)} \Biggr).
\end{align}

We claim that there exists a constant $C > 0$ such that
\begin{equation*}
    \sum_{i=1}^{n-1} \|\nabla u_\sigma\|_{L^2(N_i)} + \|u_\sigma\|_{L^2(B_R)} \le C \|f\|_{L^2(\mathbb{R}^d)}.
\end{equation*}
Suppose, for the sake of contradiction, that this estimate fails. Then there exists a sequence of parameters $\sigma_m$ and corresponding solutions $u_{\sigma_m}$ such that
\begin{equation}\label{eq21}
    c_m \coloneqq \sum_{i=1}^{n-1} \|\nabla u_{\sigma_m}\|_{L^2(N_i)} + \|u_{\sigma_m}\|_{L^2(B_R)} > m \|f\|_{L^2(\mathbb{R}^d)}
\end{equation}
for every $m \in \mathbb{N}$. We define the normalized sequences
\begin{equation*}
    \widetilde{u_{\sigma_m}} \coloneqq \frac{u_{\sigma_m}}{c_m}, \qquad \widetilde{f_m} \coloneqq \frac{f}{c_m}.
\end{equation*}
By \eqref{eq21}, we have
\begin{equation}\label{eq22}
    \sum_{i=1}^{n-1} \|\nabla \widetilde{u_{\sigma_m}}\|_{L^2(N_i)} + \|\widetilde{u_{\sigma_m}}\|_{L^2(B_R)} = 1, \qquad 
    \|\widetilde{f_m}\|_{L^2(\mathbb{R}^d)} < \frac{1}{m}.
\end{equation}
By Proposition~\ref{prop:uniform-estimate}, we know that
\begin{equation*}
    \|\widetilde{u_{\sigma_m}}\|_{H^1(\mathbb{R}^d)} \le \frac{C}{\sigma_m} \|\widetilde{f_m}\|_{L^2(\mathbb{R}^d)},
\end{equation*}
which implies that
\begin{equation*}
    \sigma_m \le \frac{C \|\widetilde{f_m}\|_{L^2(\mathbb{R}^d)}}{\|\widetilde{u_{\sigma_m}}\|_{H^1(\mathbb{R}^d)}}.
\end{equation*}
Since $\|\widetilde{f_m}\|_{L^2} \to 0$ and the denominator is bounded away from zero by \eqref{eq22}, it follows that $\sigma_m \to 0^+$ as $m \to \infty$.

Equations \eqref{eq24} and \eqref{eq22} demonstrate that the sequence $\{\widetilde{u_{\sigma_m}}\}$ is bounded in $H^1_{\mathrm{loc}}(\mathbb{R}^d)$. Passing to a subsequence if necessary (which we still denote by $\{\widetilde{u_{\sigma_m}}\}$), we obtain
\begin{equation*}
\widetilde{u_{\sigma_m}} \rightharpoonup \widetilde{u} \quad \text{in } H^1_{\mathrm{loc}}(\mathbb{R}^d), \qquad
\widetilde{u_{\sigma_m}} \to \widetilde{u} \quad \text{in } L^2_{\mathrm{loc}}(\mathbb{R}^d).
\end{equation*}
The limit function $\widetilde{u}$ satisfies
\begin{equation*}
    \begin{cases}
        \nabla \cdot (\varepsilon^{-1} \nabla \widetilde{u}) + \omega^2 \mu \widetilde{u} = 0 & \text{in } \mathbb{R}^d, \\
        \widetilde{u} \text{ satisfies the Sommerfeld radiation condition} & \text{as } |x| \to \infty.
    \end{cases}
\end{equation*}
Multiplying this equation by $\overline{\widetilde{u}}$, integrating over $B_R$, and applying integration by parts yields
\begin{equation}\label{eq25}
    \int_{\partial B_R} \varepsilon_n^{-1} (\nabla\widetilde{u}\cdot\vec{n}) \, \overline{\widetilde{u}} \, {\rm d}s
    - \int_{B_R} \varepsilon^{-1} |\nabla \widetilde{u}|^2 \, {\rm d}x
    + \int_{B_R} \omega^2 \mu |\widetilde{u}|^2 \, {\rm d}x = 0.
\end{equation}
Taking the imaginary part gives
\begin{equation*}
    \operatorname{Im} \int_{\partial B_R} \varepsilon_n^{-1} (\nabla\widetilde{u}\cdot\vec{n}) \, \overline{\widetilde{u}} \, {\rm d}s = 0.
\end{equation*}
By expanding $\nabla\widetilde{u}\cdot\vec{n} = {\rm i}\kappa \widetilde{u} + (\partial_r \widetilde{u} - i\kappa \widetilde{u})$ and applying the Cauchy--Schwarz inequality along with the Sommerfeld radiation condition, we have
\begin{equation}\label{eq49}
\begin{aligned}
\lim_{R\to\infty}\operatorname{Im}\int_{\partial B_R}\varepsilon_n^{-1}(\nabla\widetilde{u}\cdot\vec{n})\,\overline{\widetilde{u}}\,{\rm d}s
&=\lim_{R\to\infty}\operatorname{Im}\int_{\partial B_R}\Bigl(\varepsilon_n^{-1}{\rm i}\kappa|\widetilde{u}|^2
+\varepsilon_n^{-1}\Bigl(\frac{\partial\widetilde{u}}{\partial r}-{\rm i}\kappa\widetilde{u}\Bigr)\overline{\widetilde{u}}\Bigr){\rm d}s \\
&\le\lim_{R\to\infty}\Bigl(\operatorname{Im}\int_{\partial B_R}\varepsilon_n^{-1}{\rm i}\kappa|\widetilde{u}|^2{\rm d}s \\
&\qquad\quad+\varepsilon_n^{-1}\Bigl\|\frac{\partial\widetilde{u}}{\partial r}-{\rm i}\kappa\widetilde{u}\Bigr\|_{L^2(\partial B_R)}\|\widetilde{u}\|_{L^2(\partial B_R)}\Bigr)\\
&=\lim_{R\to\infty}\operatorname{Im}\int_{\partial B_R}\varepsilon_n^{-1}{\rm i}\kappa|\widetilde{u}|^2{\rm d}s\\
&=\lim_{R\to\infty}\int_{\partial B_R}\varepsilon_n^{-1}\kappa|\widetilde{u}|^2{\rm d}s=0.
\end{aligned}
\end{equation}		
Because $\varepsilon_n^{-1} \kappa > 0$, it immediately follows that
\begin{equation*}
    \lim_{R\to\infty} \int_{\partial B_R} |\widetilde{u}|^2 \, {\rm d}s = 0.
\end{equation*}
Rellich's lemma  \cite[Lemma 2.12]{MR2986407} then implies that $\widetilde{u} = 0$ in $\mathbb{R}^d \setminus \bigcup_{i=1}^{n-1} D_i$. Consequently,
\begin{equation*}
    \widetilde{u}|_{D_{n-1}} = \widetilde{u}|_{D_n} = 0, \qquad
    \varepsilon_{n-1}^{-1}(\nabla\widetilde{u}|_{D_{n-1}}\cdot\vec{n})=\varepsilon_n^{-1}(\nabla\widetilde{u}|_{D_n}\cdot\vec{n})=0
    \quad\text{on }\Gamma_{n-1}.
\end{equation*}
We now consider the interior boundary value problem
\begin{equation}\label{eq31}
    \begin{cases}
        \nabla \cdot (\varepsilon^{-1} \nabla \widetilde{u}) + \omega^2 \mu \widetilde{u} = 0 & \text{in } \bigcup_{i=1}^{n-1} D_i, \\
        \widetilde{u} = 0, \quad \nabla\widetilde{u}\cdot\vec{n} = 0 & \text{on } \Gamma_{n-1}.
    \end{cases}
\end{equation}
Because the contrast condition ensures that
\begin{align*}
\dfrac{\varepsilon_{k+1}^2}{\varepsilon_k|\varepsilon_{k+1}|}
&> (C_1^{(k)} C^{(k)})^2, \\
\dfrac{\varepsilon_{k+1}^2}{\varepsilon_{k+2}|\varepsilon_{k+1}|}
&> (C_1^{(k)} C^{(k+2)})^2,
\end{align*}
the problem is strictly coercive. By the Fredholm alternative theorem, problem \eqref{eq31} admits only the trivial solution $\widetilde{u} = 0$ in $\bigcup_{i=1}^{n-1} D_i$ for all frequencies outside a discrete set. Thus, $\widetilde{u} = 0$ in $H^1_{\mathrm{loc}}(\mathbb{R}^d)$, which contradicts the normalization in \eqref{eq22}. This establishes the claim:
\begin{equation*}
    \sum_{i=1}^{n-1} \|\nabla u_\sigma\|_{L^2(N_i)} + \|u_\sigma\|_{L^2(B_R)} \le C \|f\|_{L^2(\mathbb{R}^d)}.
\end{equation*}
Substituting this bound back into \eqref{eq24} yields
\begin{equation*}
    \|u_\sigma\|_{H^1(B_R)} \le C \|f\|_{L^2(\mathbb{R}^d)}.
\end{equation*}

Therefore, every sequence $\sigma_m \to 0^+$ admits a subsequence $\sigma_{m_k}$ such that $u_{\sigma_{m_k}}$ converges weakly in $H^1_{\mathrm{loc}}(\mathbb{R}^d)$ and strongly in $L^2_{\mathrm{loc}}(\mathbb{R}^d)$ to a limit function $u_0$ that satisfies the boundary value problem \eqref{eq11}. By the uniqueness of this limit, the entire family $\{u_\sigma\}$ converges to $u_0$ as $\sigma \to 0^+$. Moreover, $u_0 \in H^1_{\mathrm{loc}}(\mathbb{R}^d)$ is the unique solution of \eqref{eq11} and obeys the uniform bound
\begin{equation*}
    \|u_0\|_{H^1(B_R)} \le C \|f\|_{L^2(\mathbb{R}^d)}.
\end{equation*}

\end{proof}

\section{Appendix}\label{sec5}
\subsection{Appendix A}\label{appendixA}
We review the proof strategy of Gräßle and Sauter \cite[Theorem 3.4]{gräßle2025} for establishing a lower bound on the real part of the spherical DtN operator

\begin{proof}[Proof sketch]

Let $s \in \mathbb{C}_{\ge 0}$ denote the wavenumber, and consider the exterior Dirichlet problem
\begin{equation}\label{eq38}
\begin{cases}
    -\Delta u + s^2 u = 0 & \text{in } \mathbb{R}^d \setminus B_R,\\
    u = g & \text{on } \partial B_R,
\end{cases}
\end{equation}
for $u \in H^1_{\mathrm{loc}}(\mathbb{R}^d \setminus B_R)$.

By separation of variables and spherical harmonic decomposition on $\partial B_R$, the DtN operator on the sphere $\partial B_R$ can be expanded in terms of the eigenfunctions of the Laplace–Beltrami operator. Let $\{Y_{m,j}\}_{m\in\mathbb{N}_0, \, j\in J_m}$ be an orthonormal basis of spherical harmonics on $S_R \subset \mathbb{R}^d$, satisfying
\begin{equation*}
    -\Delta_{S_R} Y_{m,j} = \frac{\lambda_m}{R^2} Y_{m,j}, \qquad \lambda_m = m(m+d-2).
\end{equation*}
For $g \in H^{1/2}(\partial B_R)$, we have the expansion
\begin{equation*}
    g(\xi) = \sum_{m\in\mathbb{N}_0} \sum_{j\in J_m} g_{m,j} \, Y_{m,j}(\xi),
\end{equation*}
where $g_{m,j}$ denote the corresponding Fourier coefficients of $g$.

Using this spherical harmonic expansion, the solution of the exterior boundary value problem \eqref{eq38} can be expressed as
\begin{equation*}
    u(r,\xi) = \sum_{m\in\mathbb{N}_0} \frac{f^{(1)}_{m,\nu}(sr)}{f^{(1)}_{m,\nu}(sR)} \sum_{j\in J_m} g_{m,j} \, Y_{m,j}(\xi),
\end{equation*}
where $\nu \coloneqq \frac{d-2}{2}$ and
\begin{equation*}
    f^{(1)}_{m,\nu}(s) \coloneqq \sqrt{\frac{\pi}{2}} \, \frac{K_{m+\nu}(s)}{s^\nu}.
\end{equation*}
Hence, we obtain the spectral representation of the DtN operator:
\begin{equation*}
    \operatorname{DtN}(s)g = \frac{1}{R} \sum_{m\in\mathbb{N}_0} z_{m,\nu}(sR) \sum_{j\in J_m} g_{m,j} \, Y_{m,j}(\xi),
\end{equation*}
where
\begin{equation*}
    z_{m,\nu}(s) \coloneqq s \frac{\frac{{\rm d}}{{\rm d}s} f^{(1)}_{m,\nu}(s)}{f^{(1)}_{m,\nu}(s)}.
\end{equation*}

By Parseval's identity, we obtain
\begin{equation*}
    \langle \operatorname{DtN}(s)g, g \rangle_{\partial B_R}
    = \frac{1}{R} \sum_{m\in\mathbb{N}_0} \sum_{j\in J_m} z_{m,\nu}(sR) \, |g_{m,j}|^2.
\end{equation*}
Taking the real part gives
\begin{equation*}
    \operatorname{Re} \langle \operatorname{DtN}(s)g, g \rangle_{\partial B_R}
    = \frac{1}{R} \sum_{m\in\mathbb{N}_0} \sum_{j\in J_m} \operatorname{Re} z_{m,\nu}(sR) \, |g_{m,j}|^2.
\end{equation*}
Consequently, establishing the coercivity bound
\begin{equation}\label{eq43}
    \frac{d-2}{2R} \|g\|_{L^2(\partial B_R)}^2 \le - \operatorname{Re} \langle \operatorname{DtN}(s)g, g \rangle_{\partial B_R}
\end{equation}
is equivalent to proving that
\begin{equation*}
    - \operatorname{Re} z_{m,\nu}(sR) \ge \nu \qquad \text{for all } m \in \mathbb{N}_0 \text{ and } s \in \mathbb{C}_{\ge 0}.
\end{equation*}
We establish this inequality in two steps.

\smallskip\noindent\textbf{Step 1: Analysis on the imaginary axis.} 
We first evaluate the operator at purely imaginary wavenumbers $s = -ik$ for $k > 0$. Using standard recurrence relations for the modified Bessel functions, the spectral coefficients can be written as
\begin{equation}\label{eq39}
    z_{m,\nu}(s) = s \frac{K'_{\zeta}(s)}{K_{\zeta}(s)} - \nu
    = m - s \frac{K_{\zeta+1}(s)}{K_{\zeta}(s)}, \qquad \zeta \coloneqq m+\nu.
\end{equation}
In terms of the Hankel function $H_\zeta^{(1)}(k) = J_\zeta(k) + {\rm i}Y_\zeta(k)$, we have the identities
\begin{equation*}
    2K_\zeta(-{\rm i}k) = \pi {\rm i}^{\zeta+1} H_\zeta^{(1)}(k), \qquad
    2K'_\zeta(-{\rm i}k) = \pi {\rm i}^{\zeta+2} \frac{d}{dk} H_\zeta^{(1)}(k).
\end{equation*}
Substituting these expressions into \eqref{eq39} yields
\begin{equation}\label{eq41}
    - \operatorname{Re} z_{m,\nu}(- {\rm i}k)
    = \nu - \frac{k}{2} \frac{\frac{{\rm d}}{{\rm d}k} \bigl( M_\zeta^2(k) \bigr)}{M_\zeta^2(k)},
\end{equation}
where $J_\zeta$ and $Y_\zeta$ are the Bessel functions of the first and second kind, respectively, and we define
\begin{equation*}
    M_\zeta^2(k) \coloneqq |H_\zeta^{(1)}(k)|^2 = J_\zeta^2(k) + Y_\zeta^2(k).
\end{equation*}
By Nicholson's integral formula, we have
\begin{equation*}
    M_\zeta^2(z) = \frac{8}{\pi^2} \int_0^\infty \cosh(2\zeta t) \, K_0(2z\sinh t) \, {\rm d}t.
\end{equation*}
Combining the chain rule, integration by parts, and the monotonicity of the integrand yields the bounds
\begin{equation}\label{eq40}
\begin{aligned}
    M_\zeta^2(k) &\le -k \frac{{\rm d} }{{\rm d}k} \bigl( M_\zeta^2(k) \bigr) \le 2\zeta M_\zeta^2(k), && \text{for } k>0 \text{ and } \zeta \ge 1/2, \\[4pt]
    2\zeta M_\zeta^2(k) &\le -k \frac{{\rm d}}{{\rm d}k} \bigl( M_\zeta^2(k) \bigr) \le M_\zeta^2(k), && \text{for } k>0 \text{ and } \zeta \in [0, 1/2].
\end{aligned}
\end{equation}
From \eqref{eq41} and \eqref{eq40}, we deduce that
\begin{equation*}
    \nu + \frac{1}{2} \le - \operatorname{Re} z_{m,\nu}(-{\rm i}k) \le \nu + \zeta \qquad \text{for all } \zeta \ge \frac{1}{2},
\end{equation*}
and for the special case $\zeta = \nu = m = 0$,
\begin{equation*}
    0 \le - \operatorname{Re} z_{0,0}(-{\rm i}k) \le \frac{1}{2}.
\end{equation*}
In particular, this establishes that
\begin{equation*}
    - \operatorname{Re} z_{m,\nu}(-{\rm i}k) \ge \nu \qquad \text{for all } m \in \mathbb{N}_0 \text{ and } k > 0.
\end{equation*}

\smallskip\noindent\textbf{Step 2: Extension to the right half-plane.}
The modified Bessel function $K_\zeta$ has no zeros in $\mathbb{C}_{\ge 0}$
and is holomorphic in $\mathbb{C}_{\ge 0} \setminus \{0\}$; consequently $z_{m,\nu}(s)$ is well defined and holomorphic for
$s\neq 0$. Since $z_{m,\nu}$ extends continuously to $s=0$, and by analytic continuation of the modified Bessel functions we obtain the symmetry
\begin{equation}\label{eq42}
    \overline{z_{m,\nu}(s)} = z_{m,\nu}(\overline{s}) \qquad \text{for all } s \in \mathbb{C}_{\ge 0}.
\end{equation}
Having established the lower bound on the imaginary axis, it remains to extend this result to the entire right half-plane $s \in \mathbb{C}_{\ge 0}$. 

We utilize the asymptotic expansion of the modified Bessel function for fixed $\zeta$ and large $|s|$,
\begin{equation*}
    K_\zeta(s) \sim \sqrt{\frac{\pi}{2s}} \, e^{-s} \Bigl( 1 + \frac{a_1(\zeta)}{s} \Bigr), \qquad
    a_1(\zeta) \coloneqq \frac{4\zeta^2 - 1}{8}.
\end{equation*}
Using this expansion together with \eqref{eq42}, we obtain that for all $s$ on the boundary $\partial(B_r \cap \mathbb{C}_{\ge 0})$ the following
inequalities hold:
\begin{equation*}
\begin{aligned}
    \nu + \frac{1}{2} &\le - \operatorname{Re} z_{m,\nu}(s) && \text{for } \zeta = m+\nu \ge \frac{1}{2}, \\[4pt]
    0 &\le - \operatorname{Re} z_{0,0}(s).
\end{aligned}
\end{equation*}
By the maximum principle for harmonic functions, these inequalities persist throughout the  right half-plane. Hence,
\begin{equation*}
    - \operatorname{Re} z_{m,\nu}(s) \ge \nu = \frac{d-2}{2} \qquad \text{for all } m \in \mathbb{N}_0 \text{ and } s \in \mathbb{C}_{\ge 0}.
\end{equation*}
This proves inequality \eqref{eq43}. For $R=1$, this coincides precisely with the statement of \cite[Theorem 3.4]{gräßle2025}.

\end{proof}

\subsection{Appendix B}\label{appendixB}
In this appendix, we detail the geometric dependence of the constants $C^{(k)}$ and $C^{(k+2)}$ appearing in Theorem~\ref{thm:well-posedness}. Since both $k$ and $k+2$ are odd and differ only in the choice of index, we present below only the analysis corresponding to $k$.

We consider the annular domain $\Omega \subset \mathbb{R}^2$ as our prototypical example; the extension to higher dimensions follows similarly. Let $\partial\Omega = \Sigma_{\mathrm{in}} \cup \Sigma_{\mathrm{out}}$, with $\Sigma_{\mathrm{in}} \coloneqq \{|x| = R_{\mathrm{in}}\}$ and $\Sigma_{\mathrm{out}} \coloneqq \{|x| = R_{\mathrm{out}}\}$. We restrict our attention to the case $R_{\mathrm{in}} > 0$;
the case $R_{\mathrm{in}} = 0$ can be handled in a similar manner and is therefore omitted.

For any $u \in H^1(\Omega)$, the trace inequality
\begin{equation*}
    \|u\|_{H^{1/2}(\partial\Omega)} \le C \|u\|_{H^1(\Omega)}
\end{equation*}
holds for some constant $C > 0$. The optimal constant $C_T$ is defined by
\begin{equation*}
    C_T \coloneqq \sup_{u \in H^1(\Omega) \setminus \{0\}} \frac{\|u\|_{H^{1/2}(\partial\Omega)}}{\|u\|_{H^1(\Omega)}}.
\end{equation*}

Expanding $u$ as a Fourier series in polar coordinates $(r,\theta)$, we write
\begin{equation*}
    u(r,\theta) = \sum_{q\in\mathbb{Z}} u_q(r) e^{ {\rm i}q\theta},
    \qquad
    u_q(r) = \frac{1}{2\pi} \int_0^{2\pi} u(r,\theta) e^{-{\rm i} q\theta} \, {\rm d}\theta.
\end{equation*}
Using the polar representation of the gradient, we obtain
\begin{align*}
    \|u\|_{H^1(\Omega)}^2
    &= \int_{R_{\mathrm{in}}}^{R_{\mathrm{out}}} \int_0^{2\pi} \Bigl( |\partial_r u|^2 + \frac{1}{r^2}|\partial_\theta u|^2 + |u|^2 \Bigr) r \, {\rm d}\theta \, {\rm d}r \\
    &= 2\pi \sum_{q\in\mathbb{Z}} \int_{R_{\mathrm{in}}}^{R_{\mathrm{out}}} \Bigl( |u_q'|^2 + \frac{q^2}{r^2}|u_q|^2 + |u_q|^2 \Bigr) r \, {\rm d}r.
\end{align*}
On the boundary space $H^{1/2}(\partial\Omega)$, an equivalent norm is given by
\begin{equation*}
    \|u\|_{H^{1/2}(\partial\Omega)}^2 = 2\pi \sum_{q\in\mathbb{Z}} \Bigl( \gamma_{\mathrm{in}}(q) |u_q(R_{\mathrm{in}})|^2 + \gamma_{\mathrm{out}}(q) |u_q(R_{\mathrm{out}})|^2 \Bigr),
\end{equation*}
where we have introduced the geometric weights $\gamma_{\mathrm{in}}(q) \coloneqq \sqrt{R_{\mathrm{in}}^2 + q^2}$ and $\gamma_{\mathrm{out}}(q) \coloneqq \sqrt{R_{\mathrm{out}}^2 + q^2}$. The optimal constant $\widetilde{C_T}$ associated with this equivalent norm satisfies
\begin{equation*}
    \widetilde{C_T}^2 = \sup_{u \in H^1(\Omega) \setminus \{0\}} \frac{\sum_{q\in\mathbb{Z}} \mathcal{B}_q[u_q]}{\sum_{q\in\mathbb{Z}} \mathcal{E}_q[u_q]},
\end{equation*}
where the boundary and interior functionals are defined as
\begin{align*}
    \mathcal{B}_q[v] &\coloneqq 2\pi \Bigl( \gamma_{\mathrm{in}}(q) |v(R_{\mathrm{in}})|^2 + \gamma_{\mathrm{out}}(q) |v(R_{\mathrm{out}})|^2 \Bigr), \\
    \mathcal{E}_q[v] &\coloneqq 2\pi \int_{R_{\mathrm{in}}}^{R_{\mathrm{out}}} \Bigl( |v'|^2 + \frac{q^2}{r^2}|v|^2 + |v|^2 \Bigr) r \, {\rm d}r.
\end{align*}
Consequently, the original constant $C_T$ and the equivalent constant $\widetilde{C_T}$ are comparable.

Since
\[
\dfrac{\sum_{q\in\mathbb{Z}}\mathcal{B}_q[u_q]}{\sum_{q\in\mathbb{Z}}\mathcal{E}_q[u_q]}\leq\sup_{q\in\mathbb{Z}}\dfrac{\mathcal{B}_q[u_q]}{\mathcal{E}_q[u_q]},
\]
it follows that,
\begin{equation*}
    \widetilde{C_T}^2 \le \sup_{q \in \mathbb{Z}} \, \sup_{v \neq 0} \frac{\mathcal{B}_q[v]}{\mathcal{E}_q[v]} \eqqcolon \sup_{q \in \mathbb{Z}} \Lambda_q.
\end{equation*}
Conversely, let $q \in \mathbb{Z}$ be fixed, and consider a single-mode test function $u^* \in H^1(\Omega) \setminus \{0\}$ of the form
\begin{equation*}
    u^*(r,\theta) \coloneqq v(r) e^{iq\theta}.
\end{equation*}
Because the Fourier coefficients satisfy $u_p = 0$ for all $p \neq q$, then
\begin{equation*}
    \frac{\sum_{p \in \mathbb{Z}} \mathcal{B}_p[u_p]}{\sum_{p \in \mathbb{Z}} \mathcal{E}_p[u_p]}
    = \frac{\mathcal{B}_q[v]}{\mathcal{E}_q[v]}.
\end{equation*}
Taking the supremum over all non-zero radial profiles $v$, and subsequently taking the supremum over all modes $q \in \mathbb{Z}$, yields
\begin{equation*}
    \widetilde{C_T}^2 \ge \sup_{q \in \mathbb{Z}} \, \sup_{v \neq 0} \frac{\mathcal{B}_q[v]}{\mathcal{E}_q[v]} = \sup_{q \in \mathbb{Z}} \Lambda_q.
\end{equation*}
Consequently, we conclude that
\begin{equation*}
    \widetilde{C_T}^2 = \sup_{q \in \mathbb{Z}} \Lambda_q.
\end{equation*}

Since
\begin{equation*}
    \Lambda_q = \sup_{v \neq 0} \frac{\mathcal{B}_q[v]}{\mathcal{E}_q[v]} = \sup_{\mathcal{E}_q[v] = 1} \mathcal{B}_q[v].
\end{equation*}
and
\begin{equation*}
    \mathcal{E}_q[v] = 2\pi \int_{R_{\mathrm{in}}}^{R_{\mathrm{out}}} \Bigl( |v'|^2 + \frac{q^2}{r^2} |v|^2 + |v|^2 \Bigr) r \, dr
\end{equation*}
induces a norm equivalent to the standard $H^1(I_0)$ norm, where  \(I_0:=(R_{\mathrm{in}},R_{\mathrm{out}})\).

Fix \(q\in\mathbb Z\). Since
\(\mathcal E_q[\cdot]^{1/2}\) is equivalent to the standard \(H^1(I_0)\)-norm,
the set
\[
K_q:=\{v\in H^1(I_0):\mathcal E_q[v]\leq 1\}
\]
is bounded and weakly closed in \(H^1(I_0)\). Hence \(K_q\) is weakly compact.
Moreover, by the compact embedding
\[
H^1(I_0)\hookrightarrow C(\overline{I_0}),
\]
which follows from \cite[Theorem 1.2.5]{wu2006}, the functional
\(\mathcal B_q\), depending only on the two endpoint values of \(v\), is
weakly continuous on \(H^1(I_0)\). Therefore \(\mathcal B_q\) attains its
maximum on \(K_q\).

Since \(\mathcal B_q\) is nonnegative and homogeneous of degree two, and since
\(\sup_{\mathcal E_q[v]\leq 1}\mathcal B_q[v]>0\), any maximizer must satisfy
\(\mathcal E_q[v]=1\). Otherwise, if \(0<\mathcal E_q[v]<1\), replacing \(v\)
by \(v/\mathcal E_q[v]^{1/2}\) would give a larger value of \(\mathcal B_q\).
Consequently,
\[
\Lambda_q
=
\sup_{\mathcal E_q[v]=1}\mathcal B_q[v]
\]
is attained.

To characterize this extremal and determine $\Lambda_q$, we introduce the Lagrangian functional
\begin{equation*}
    \mathcal{F}_q[v] \coloneqq \mathcal{B}_q[v] - \Lambda_q \mathcal{E}_q[v].
\end{equation*}
Consider a small perturbation of the form $v_\tau(r) = v(r) + \tau \chi(r)$, where $\chi$ is an arbitrary smooth complex-valued function and $\tau \in \mathbb{R}$ is a small parameter. If $v$ is an extremal of $\mathcal{F}_q$, then
\[
\frac{{\rm d}}{{\rm d}\tau} \mathcal{F}_q[v_\tau] \big|_{\tau=0} = 0.
\]
Substituting $v_\tau$ into $\mathcal{F}_q[v]$, differentiating with respect to $\tau$, and evaluating at $\tau=0$ yields
\begin{equation}\label{eq55}
\begin{aligned}
    \operatorname{Re} \Biggl( &\sqrt{R_{\mathrm{in}}^2 + q^2} \, \overline{\chi(R_{\mathrm{in}})} v(R_{\mathrm{in}}) 
    + \sqrt{R_{\mathrm{out}}^2 + q^2} \, \overline{\chi(R_{\mathrm{out}})} v(R_{\mathrm{out}}) \\
    &- \Lambda_q \int_{R_{\mathrm{in}}}^{R_{\mathrm{out}}} \Bigl( \overline{\chi}' v' + \Bigl(1 + \frac{q^2}{r^2}\Bigr) \overline{\chi} v \Bigr) r \, {\rm d}r \Biggr) = 0
\end{aligned}
\end{equation}
for all test functions $\chi$. Applying integration by parts to the derivative term gives
\begin{equation}\label{eq56}
    \int_{R_{\mathrm{in}}}^{R_{\mathrm{out}}} \overline{\chi}' v' r \, {\rm d}r
    = \overline{\chi(R_{\mathrm{out}})} R_{\mathrm{out}} v'(R_{\mathrm{out}}) - \overline{\chi(R_{\mathrm{in}})} R_{\mathrm{in}} v'(R_{\mathrm{in}})
    - \int_{R_{\mathrm{in}}}^{R_{\mathrm{out}}} \overline{\chi} (r v')' \, {\rm d}r.
\end{equation}
Substituting \eqref{eq56} into \eqref{eq55} we obtain
\begin{align}\label{eq57}
    &\operatorname{Re} \Biggl( \overline{\chi(R_{\mathrm{in}})} \bigl( \sqrt{R_{\mathrm{in}}^2 + q^2} \, v(R_{\mathrm{in}}) + \Lambda_q R_{\mathrm{in}} v'(R_{\mathrm{in}}) \bigr) \notag \\
    &\qquad + \overline{\chi(R_{\mathrm{out}})} \bigl( \sqrt{R_{\mathrm{out}}^2 + q^2} \, v(R_{\mathrm{out}}) - \Lambda_q R_{\mathrm{out}} v'(R_{\mathrm{out}}) \bigr) \notag \\
    &\qquad + \Lambda_q \int_{R_{\mathrm{in}}}^{R_{\mathrm{out}}} \overline{\chi} \Bigl( (r v')' - \Bigl(1 + \frac{q^2}{r^2}\Bigr) v r \Bigr) \, {\rm d}r \Biggr) = 0.
\end{align}

Because the test function $\chi$ is arbitrary, we may restrict it strictly to the interior so that $\chi(R_{\mathrm{in}}) = \chi(R_{\mathrm{out}}) = 0$. Under this restriction, \eqref{eq57} reduces to
\begin{equation}\label{eq61}
    \operatorname{Re} \Biggl( \Lambda_q \int_{R_{\mathrm{in}}}^{R_{\mathrm{out}}} \overline{\chi} \Bigl( (r v')' - \Bigl( 1 + \frac{q^2}{r^2} \Bigr) v r \Bigr) \, {\rm d}r \Biggr) = 0.
\end{equation}
Because $\chi$ is complex-valued, substituting ${\rm i}\chi$ for $\chi$ reveals that the imaginary part of this integral must also vanish. We thus conclude that
\begin{equation*}
    \int_{R_{\mathrm{in}}}^{R_{\mathrm{out}}} \overline{\chi} \Bigl( (r v')' - \Bigl( 1 + \frac{q^2}{r^2} \Bigr) v r \Bigr) \, {\rm d}r = 0
\end{equation*}
for all $\chi$. Hence, $v$ must satisfy the  Euler--Lagrange equation
\begin{equation}\label{eq59}
    (r v')' - \Bigl( 1 + \frac{q^2}{r^2} \Bigr) v r = 0 \quad \text{in } I_0,
\end{equation}
i.e.,
\begin{equation*}
    r^2 v'' + r v' - (r^2 + q^2) v = 0.
\end{equation*}
This is the modified Bessel equation, whose general solution is
\begin{equation}\label{eq58}
    v(r) = A_T I_{\alpha}(r) + B_T K_{\alpha}(r), \qquad \alpha \coloneqq |q|,
\end{equation}
where $A_T$ and $B_T$ are complex constants, and $I_{\alpha}$ and $K_{\alpha}$ are the modified Bessel functions of the first and second kind, respectively.

Returning to \eqref{eq57}, the validity of the Euler--Lagrange equation \eqref{eq59} implies that the interior integral vanishes identically. We are thus left solely with the boundary evaluations:
\begin{align*}
    \operatorname{Re} \Biggl( &\overline{\chi(R_{\mathrm{in}})} \bigl( \sqrt{R_{\mathrm{in}}^2 + q^2} \, v(R_{\mathrm{in}}) + \Lambda_q R_{\mathrm{in}} v'(R_{\mathrm{in}}) \bigr) \\
    &+ \overline{\chi(R_{\mathrm{out}})} \bigl( \sqrt{R_{\mathrm{out}}^2 + q^2} \, v(R_{\mathrm{out}}) - \Lambda_q R_{\mathrm{out}} v'(R_{\mathrm{out}}) \bigr) \Biggr) = 0.
\end{align*}
Because this identity holds for all complex-valued test functions $\chi$, we may prescribe the endpoint values $\chi(R_{\mathrm{in}})$ and $\chi(R_{\mathrm{out}})$ independently. By successively choosing the boundary pairs $(\chi(R_{\mathrm{in}}), \chi(R_{\mathrm{out}})) \in \{(1,0), ({\rm i},0), (0,1), (0,{\rm i})\}$, we deduce the following boundary conditions:
\begin{equation}\label{eq60}
\begin{cases}
    \sqrt{R_{\mathrm{in}}^2 + q^2} \, v(R_{\mathrm{in}}) + \Lambda_q R_{\mathrm{in}} v'(R_{\mathrm{in}}) = 0, \\[4pt]
    \sqrt{R_{\mathrm{out}}^2 + q^2} \, v(R_{\mathrm{out}}) - \Lambda_q R_{\mathrm{out}} v'(R_{\mathrm{out}}) = 0.
\end{cases}
\end{equation}

Substituting \eqref{eq58} into \eqref{eq60} yields a linear system for  $A_T$ and $B_T$:
\begin{equation*}
\begin{cases}
    A_T \bigl( \sqrt{R_{\mathrm{in}}^2 + q^2} \, I_{\alpha}(R_{\mathrm{in}}) + \Lambda_q R_{\mathrm{in}} I_{\alpha}'(R_{\mathrm{in}}) \bigr)
    + B_T \bigl( \sqrt{R_{\mathrm{in}}^2 + q^2} \, K_{\alpha}(R_{\mathrm{in}}) + \Lambda_q R_{\mathrm{in}} K_{\alpha}'(R_{\mathrm{in}}) \bigr) = 0, \\[6pt]
    A_T \bigl( \sqrt{R_{\mathrm{out}}^2 + q^2} \, I_{\alpha}(R_{\mathrm{out}}) - \Lambda_q R_{\mathrm{out}} I_{\alpha}'(R_{\mathrm{out}}) \bigr)
    + B_T \bigl( \sqrt{R_{\mathrm{out}}^2 + q^2} \, K_{\alpha}(R_{\mathrm{out}}) - \Lambda_q R_{\mathrm{out}} K_{\alpha}'(R_{\mathrm{out}}) \bigr) = 0.
\end{cases}
\end{equation*}
A nontrivial solution for the coefficients $A_T$ and $B_T$ exists if and only if the determinant of the associated coefficient matrix vanishes. This requires that
\begin{equation}\label{eq62}
\begin{vmatrix}
    \sqrt{R_{\mathrm{in}}^2 + q^2} \, I_{\alpha}(R_{\mathrm{in}}) + \Lambda_q R_{\mathrm{in}} I_{\alpha}'(R_{\mathrm{in}}) &
    \sqrt{R_{\mathrm{in}}^2 + q^2} \, K_{\alpha}(R_{\mathrm{in}}) + \Lambda_q R_{\mathrm{in}} K_{\alpha}'(R_{\mathrm{in}}) \\[8pt]
    \sqrt{R_{\mathrm{out}}^2 + q^2} \, I_{\alpha}(R_{\mathrm{out}}) - \Lambda_q R_{\mathrm{out}} I_{\alpha}'(R_{\mathrm{out}}) &
    \sqrt{R_{\mathrm{out}}^2 + q^2} \, K_{\alpha}(R_{\mathrm{out}}) - \Lambda_q R_{\mathrm{out}} K_{\alpha}'(R_{\mathrm{out}})
\end{vmatrix} = 0.
\end{equation}
This equation determines $\Lambda_q$.

Conversely, if $\widetilde{\Lambda}_q$ is a root of \eqref{eq62}, then there exists a nontrivial function $v \neq 0$ satisfying the boundary value problem
\begin{equation}\label{eq63}
\begin{cases}
    (r v')' - \Bigl( 1 + \frac{q^2}{r^2} \Bigr) v r = 0 & \text{in } I_0, \\[6pt]
    \sqrt{R_{\mathrm{in}}^2 + q^2} \, v(R_{\mathrm{in}}) + \widetilde{\Lambda}_q R_{\mathrm{in}} v'(R_{\mathrm{in}}) = 0 & \text{at } r = R_{\mathrm{in}}, \\[4pt]
    \sqrt{R_{\mathrm{out}}^2 + q^2} \, v(R_{\mathrm{out}}) - \widetilde{\Lambda}_q R_{\mathrm{out}} v'(R_{\mathrm{out}}) = 0 & \text{at } r = R_{\mathrm{out}}.
\end{cases}
\end{equation}
Multiplying the differential equation in \eqref{eq63} by $\overline{v}$ and integrating over the interval $I_0$ yields
\begin{equation*}
    \int_{R_{\mathrm{in}}}^{R_{\mathrm{out}}} (r v')' \overline{v} \, {\rm d}r - \int_{R_{\mathrm{in}}}^{R_{\mathrm{out}}} \Bigl( 1 + \frac{q^2}{r^2} \Bigr) |v|^2 r \, {\rm d}r = 0.
\end{equation*}
Applying integration by parts to the first term, we obtain
\begin{equation}\label{eq64}
    (r v' \overline{v}) \big|_{R_{\mathrm{in}}}^{R_{\mathrm{out}}} 
    = \int_{R_{\mathrm{in}}}^{R_{\mathrm{out}}} \Bigl( |v'|^2 + \frac{q^2}{r^2} |v|^2 + |v|^2 \Bigr) r \, {\rm d}r 
    = \frac{1}{2\pi} \mathcal{E}_q[v].
\end{equation}
Simultaneously, rearranging the boundary conditions in \eqref{eq63} yields
\begin{equation*}
\begin{aligned}
    \widetilde{\Lambda}_q R_{\mathrm{in}} v'(R_{\mathrm{in}}) &= - \sqrt{R_{\mathrm{in}}^2 + q^2} \, v(R_{\mathrm{in}}), \\
    \widetilde{\Lambda}_q R_{\mathrm{out}} v'(R_{\mathrm{out}}) &= \sqrt{R_{\mathrm{out}}^2 + q^2} \, v(R_{\mathrm{out}}).
\end{aligned}
\end{equation*}
Multiplying these identities by $\overline{v(R_{\mathrm{in}})}$ and $\overline{v(R_{\mathrm{out}})}$, respectively, and subtracting the former from the latter, we obtain
\begin{equation}\label{eq65}
    \widetilde{\Lambda}_q (r v' \overline{v}) \big|_{R_{\mathrm{in}}}^{R_{\mathrm{out}}} = \frac{1}{2\pi} \mathcal{B}_q[v].
\end{equation}
Combining \eqref{eq64} and \eqref{eq65} reveals that
\begin{equation*}
    \widetilde{\Lambda}_q \mathcal{E}_q[v] = \mathcal{B}_q[v].
\end{equation*}

Because $v \neq 0$ implies $\mathcal{E}_q[v] > 0$, we conclude that
\begin{equation*}
    \widetilde{\Lambda}_q = \frac{\mathcal{B}_q[v]}{\mathcal{E}_q[v]} \le \sup_{w \neq 0} \frac{\mathcal{B}_q[w]}{\mathcal{E}_q[w]} = \Lambda_q.
\end{equation*}
Therefore, $\Lambda_q(R_{\mathrm{in}}, R_{\mathrm{out}})$ is precisely the largest root of \eqref{eq62}. The optimal constant corresponding to the $q$-th Fourier mode is thus given by
\begin{equation*}
    \widetilde{C}_{T,q}^2 = \Lambda_q(R_{\mathrm{in}}, R_{\mathrm{out}}).
\end{equation*}
Consequently, the global optimal constant for the equivalent trace norm is
\begin{equation*}
    \widetilde{C_T}^2 = \sup_{q \in \mathbb{Z}} \widetilde{C}_{T,q}^2 = \sup_{q \in \mathbb{Z}} \Lambda_q(R_{\mathrm{in}}, R_{\mathrm{out}}).
\end{equation*}

By analogous reasoning, these results extend to dimensions $d \ge 3$, where the quantity $\Lambda_q$ also depends on the dimension.

Therefore, for a shell domain $D_k$ with inner radius $R_{\mathrm{in}}^{(k)}$ and outer radius $R_{\mathrm{out}}^{(k)}$, the modified optimal trace constant $\widetilde{C_T}^{(k)}$ is given by
\begin{align}
\label{eq:trace_constant}
\left(\widetilde{C_T}^{(k)}\right)^2 = 
\begin{cases} 
\sup\limits_{q \in \mathbb{Z}} \Lambda_q^{(k)}(R_{\mathrm{in}}^{(k)}, R_{\mathrm{out}}^{(k)}), & d = 2, \\
\sup\limits_{q \in \mathbb{N}_0} \Lambda_q^{(k)}(R_{\mathrm{in}}^{(k)}, R_{\mathrm{out}}^{(k)}, d), & d \ge 3,
\end{cases}
\end{align}
where, for $d \ge 3$, the index $q$ denotes the degree of the corresponding spherical harmonics.

Furthermore, there exist positive constants $m_1$ and $m_2$, depending only on the radii $R_{\mathrm{in}}^{(k)}, R_{\mathrm{out}}^{(k)}$ and the dimension $d$, such that
\begin{equation*}
m_1 C_T^{(k)} \le \widetilde{C_T}^{(k)} \le m_2 C_T^{(k)}.
\end{equation*}
This double inequality implies that $C_T^{(k)}$ and $\widetilde{C_T}^{(k)}$ are comparable. Consequently, any estimate derived for $\widetilde{C_T}^{(k)}$ applies to the original optimal trace constant $C_T^{(k)}$ up to a constant factor.


\end{document}